\numberwithin{equation}{section}
\newtheorem{thm}{Theorem}[section]
\newtheorem{lem}[thm]{Lemma}
\newtheorem{cor}[thm]{Corollary}
\newtheorem{defin}[thm]{Definition}
\newtheorem{rem}[thm]{Remark}
\begin{document}
\title{Proceedings of the American Mathematical Society}

\author{Ravshan Ashurov}
\address{Institute of Mathematics, Uzbekistan Academy of Science}
\curraddr{Institute of Mathematics, Uzbekistan Academy of Science,
Tashkent, 81 Mirzo Ulugbek str. 100170} \email{ashurovr@gmail.com}

\small

\title[Generalized localization]
{Generalized localization for spherical partial sums of the
multiple Fourier series and integrals}

\begin{abstract}

It is well known, that Luzin's conjecture has a positive solution
for one dimensional trigonometric Fourier series and it is still
open for the spherical partial sums $S_\lambda f(x)$, $f\in
L_2(\mathbb{T}^N)$, of multiple Fourier series, while it has the
solution for square and rectangular partial sums. Historically
progress with solving Luzin's conjecture has been made by
considering easier problems. One of such easier problems for
$S_\lambda f(x)$ was suggested by V. A. Il'in in 1968 and this
problem is called the generalized localization principle. In this
paper we first give a short survey on convergence
almost-everywhere of Fourier series and on generalized
localization of Fourier integrals, then present a positive
solution for the generalized localization problem  for $S_\lambda
f(x)$.

\vskip 0.3cm \noindent {\it AMS 2000 Mathematics Subject
Classifications} :
Primary 42B05; Secondary 42B99.\\
{\it Key words}: Multiple Fourier series and integrals, spherical
partial sums, convergence almost-everywhere, generalized
localization.
\end{abstract}

\maketitle

\section{Introduction }

Let $\{f_n\}$, $n\in \mathbb{Z}^N$, be the Fourier coefficients of a
function $f\in L_2(\mathbb{T}^N)$, $N\geq2$, i.e.
$$f_n=(2\pi)^{-N}\int\limits_{\mathbb{T}^N}
f(y)e^{-iny}dy,$$
where $\mathbb{T}^N$ is $N$-dimensional torus: $\mathbb{T}^N = (\pi, \pi]^N$. Consider the spherical partial
sums of the multiple Fourier series:
\begin{equation}\label{SL}
S_\lambda f(x)=\sum\limits_{|n|^2< \lambda}f_n\,e^{inx},
\end{equation}
where
$nx=n_1x_1+n_2x_2+...+n_Nx_N$
and $|n|=\sqrt{n_1^2+n_2^2+...+n_N^2}$.

The aim of this paper is to investigate convergence
almost-everywhere (a.e.) of these partial sums. One of the first
questions which arises in the study of a.e. convergence of the
sums (\ref{SL}) is the question of the validity of the Luzin
conjecture: is it true that the spherical sums (\ref{SL}) of
$N$-fold Fourier series of an arbitrary function $f\in
L_2(\mathbb{T}^N)$ converge a.e. on $\mathbb{T}^N$? In other
words, does Carleson's theorem extend to spherical partial sums
(\ref{SL})? The answer to this question is unknown so far (see
\cite{AAP} and \cite{AIN}). What is known is only that Hunt's
theorem does not extend to partial sums (\ref{SL}) \cite{MN}.
Namely, for each $p\in [1, 2)$ there exists a function $f\in
L_p(\mathbb{T}^N)$ such that on a set of positive measure
$$
\overline{\lim\limits_{\lambda\to\infty}}|S_\lambda
f(x)|=+\infty.
$$
As the authors of paper \cite{MN} showed, this result is a
consequence of Fefferman's celebrated theorem to the effect that
partial sums (\ref{SL}) does not converge in $L_p(\mathbb{T}^N)$,
$1\leq p <2$.

Historically progress with
solving the Luzin conjecture has been made by considering easier
problems. One of such easier problems is the generalized principle of localization.

Il'in \cite{IL} was the first to introduce the concept of
generalized localization principle for an arbitrary eigenfunction
expansions.  We present the corresponding definition for Fourier
series; for Fourier integrals the definition is similar.

\begin{defin}We say that for $S_\lambda f$ the generalized
localization principle in function class $L_p(\mathbb{T}^N)$, $p\geq 1$, holds true, if for
any function $f \in L_p(\mathbb{T}^N)$, with $f=0$ on a open set $\Omega\subset \mathbb{T}^N$ the equality
$$
\lim_{\lambda \to \infty} S_\lambda f(x) =  0
$$
is valid for a.e. on $\Omega$.
\end{defin}

Recall that the validity of the classical Riemann  localization
principle means that the convergence at a point $x_0$ of the Fourier series of a
function $f$ depends only on the behavior of $f$ in a small neighborhood
of that point. More exactly, if $f=0$ in an open set $\Omega\subset \mathbb{T}^N$ then the Fourier
series of $f$ converges to $0$ at every point of $\Omega$. For generalized
localization one requires that the convergence to $0$ occurs a.e. on $\Omega$.

Why one should consider the generalized localization instead of the classical
one in $L_p$-classes? According to the classical Riemann theorem, in the one-dimensional case, the localization
principle is valid for any integrable function. The situation changes in the case of
functions of two or more variables. In this case, there are examples of functions
with high smoothness for which the classical principle of localization is not valid
(see \cite{AAP}). Therefore, the principle of generalized localization, introduced by
Il'in, may become a replacement for the classical principle of localization.

Recently the problem of generalized localization was completely
solved in $L_p(\mathbb{T}^N)$ classes \cite{AR}. In the present
paper we first give a short survey on convergence a.e. of Fourier
series, then indicate  a sketch of the proof from \cite{AR}.

\section{Convergence almost-everywhere of Fourier series }

{\bf 1. One dimensional Fourier series.} A problem of expanding of a function $f(t)\in L_1(\mathbb{T})$, $\mathbb{T}=(-\pi, \pi]$, in terms of trigonometric
series
$$
\sum\limits a_k\cos kt+b_k\sin kt,
$$
arises in mathematical physics and investigation of the
convergence problems of such series is very important. Here $a_k$ and $b_k$ are as usual
Fourier coefficients of function $f$.

Partial sums are defined as follows:
$$
S_m(t)=\sum\limits_{k=0}^m a_k\cos kt+b_k\sin kt.
$$
If we define a new coefficients as $c_k=\frac{1}{2}(a_k-ib_k),c_{-k}=\overline{c_k},$ then partial sums can be rewtitten as
\begin{equation}\label{ps1}
S_m(t)=\sum\limits_{|k|\leq m}c_ke^{ikt}.
\end{equation}
These partial sums can easily be generalized in multidimensional case.

In 1915 N.N.Luzin made the conjecture that the Fourier series of
any function in $L_2(\mathbb{T})$ converges a.e.. In other words,
if
$$
\sum\limits|c_k|^2<\infty
$$
then the series
$$
\sum\limits c_ke^{ikt}
$$
converges for almost every $t\in \mathbb{T}=(-\pi,\pi].$

More than 50 years the Luzin's conjecture attracted the attention of the
specialists inducing a large number of investigations. In
1922 A.N.Kolmogorov constructed a remarkable example of a
function $f\in L_1(\mathbb{T})$, whose Fourier series diverges a.e. and
even at every point of $\mathbb{T}$ (see \cite{AAP} and \cite{AIN}). After this result some specialists have doubted the validity of the
the Luzin's conjecture, since Kolmogorov's counterexample in $L_1$ was unbounded in any interval, but it was thought to be only a matter of time before a continuous counterexample was found.

This long story has its end only in
1966 when Carleson proved that Luzin's problem has a positive solution. Now this is a fundamental result in mathematical analysis. It may be worth mentioning that even Carleson started (see \cite{RS}) by trying to find a continuous counterexample to the Luzin's conjecture making use of the Blaschke products and Zygmund was very positive about that. But he realized eventually that his approach could not work. Only after this he tried instead to prove Luzin's conjecture.

Soon after two years (in 1968) R.A. Hunt, improving Carleson's
method, showed that the Fourier series of any function $f\in L_p(\mathbb{T}),
p>1,$ converges a.e. on $\mathbb{T}.$ Thus the problem of convergence a.e. of Fourier series is
completely solved in classes $L_p(\mathbb{T})$, $p\geq 1$: if $p> 1$
then we have the convergence and if $p=1$, then the
convergence a.e. fails.

{\bf 2. Partial sums in multidimensional case.} The multidimensional Fourier series has the form
\begin{equation}\label{fs}
\sum\limits_{n\in Z^N}f_ne^{i nx}.
\end{equation}
The definition (\ref{ps1}) of partial sums generalizes to the multidimensional
case in three obvious forms, which are often found in the literature.

Partial sums of series (\ref{fs})
$$
M_kf(x)=\sum\limits_{|n_1|\leq
k}\cdot\cdot\cdot\sum\limits_{|n_N|\leq k} f_ne^{i nx}
$$
is called \textbf{square partial sums}. We say that the above series converges over
squares if $\lim\limits_{k\rightarrow \infty} M_k f(x)$ exists.

Second partial sums have the form
$$
R_{k,\cdot\cdot\cdot,l}f(x)=\sum\limits_{|n_1|\leq
k}\cdot\cdot\cdot\sum\limits_{|n_N|\leq l} f_ne^{i nx},
$$
which is called \textbf{rectangular partial sums}. We say that series (\ref{fs}) converges
over rectangles if
$$
\lim\limits_{\min\{k,...,l\}\rightarrow \infty}
R_{k,\cdot\cdot\cdot,l} f(x)
$$
exists.

Finally, \textbf{spherical partial sums} of series (\ref{fs}) are defined as (see (\ref{SL}))
$$
S_\lambda f(x)=\sum\limits_{|n|^2< \lambda}f_n\,e^{inx}.
$$
If
$\lim\limits_{\lambda\rightarrow \infty} S_\lambda f(x)$
exists, then we say that series (\ref{fs}) converges over spheres.

All these
partial sums are a natural generalization of one dimensional case
(see (\ref{ps1})). But from spectral theory point of view, the
natural  generalization is the spherical partial sums. Note the
eigenfunctions of the periodical Laplace operator are the
functions
$u_n(x)=(2\pi)^{-\frac{N}{2}}e^{inx},$
while the eigenvalues equal $\lambda_n=|n|^2.$ Thus eigenfunction
expansions of function $f$ have the form (\ref{SL}).

{\bf 3. Convergence almost-everywhere.} In 1970 N. R. Tevzadze \cite{T} has extended Carleson's theorem
to double Fourier series with square partial sums. The idea of his proof is very simple. Indeed, let $f\in L_2(\mathbb{T}^2)$
be expanded in a double Fourier series whose quadratic partial sums equal
$$
M_k f (x_1, x_2) = \sum\limits_{|n_1|\leq
k}\sum\limits_{|n_2|\leq k} f_{n_1, n_2}e^{i (n_1x_1+n_2x_2)}.
$$
One can rewrite this sum as a sum of two "one dimensional" partial sums
\begin{equation}\label{M}
M_k f (x_1, x_2) = \sum\limits_{|n_1|\leq
k} A_{n_1}(x_2)e^{i n_1x_1}+\sum\limits_{|n_2|\leq
k} B_{n_2}(x_1)e^{i n_2x_2},
\end{equation}
where the coefficients have the form
$$
A_{n_1}(x_2)=\sum\limits_{|n_2|\leq
|n_1|} f_{n_1, n_2}e^{i n_2x_2}, \quad B_{n_2}(x_1)=\sum\limits_{|n_1|\leq
|n_2|} f_{n_1, n_2}e^{i n_1x_1}.
$$
It is not hard to prove, using Levi's theorem, that the series
$$
\sum\limits_{n_1=-\infty}^{\infty}A^2_{n_1}(x_2)
$$
converge for almost every $x_2\in\mathbb{T}$. This means, that for
almost every $x_2\in \mathbb{T}$ we can consider  the first of the
sums in the right hand side of (\ref{M}) as the partial sums of a
Fourier series of a $L_2$ - function of $x_1$. Therefore by
Carleson's theorem, this sum must converge for almost all $x_1\in
\mathbb{T}$. By the same way one can prove that the second sum in
(\ref{M}) for almost every $x_1$ converges in $x_2$ a.e. on
$\mathbb{T}$, i.e.
$$
\lim\limits_{k\rightarrow\infty}M_kf(x_1, x_2) = f(x_1, x_2)
$$
a.e. on $\mathbb{T}^2$. Note that the convergence to $f$ exactly follows from the fact that $M_k$ converges to $f$ in $L_2(\mathbb{T}^2)$.

In an analogous way one can prove, that
\begin{equation}\label{Ml}
\lim\limits_{k\rightarrow\infty}\widetilde{M}_kf(x_1, x_2) = f(x_1, x_2),
\end{equation}
for almost all $(x_1, x_2)\in \mathbb{T}^2$, where
$$
\widetilde{M}_k f (x_1, x_2) = \sum\limits_{|n_1|\leq
m_1(k)}\sum\limits_{|n_2|\leq m_2(k)} f_{n_1, n_2}e^{i (n_1x_1+n_2x_2)}
$$
and $m_1(k)$ and $m_2(k)$  are arbitrary decreasing sequences of integers.

It should be noted that the proof presented in the above paper
easily can be extended to $N$-dimensional case.

An analogue of the Hunt's theorem was obtained by P.  Sj$\ddot{o}$lin in 1971
\cite{SJ71}.

\textbf{Theorem} (Sj$\ddot{o}$lin).
\emph{If $f\in L_p((\mathbb{T}^N)$,  $p>1$,
then one has $\lim\limits_{k\to\infty}M_kf(x)=f(x)$ a.e. on} $\mathbb{T}^N$.

Note, in fact Sj$\ddot{o}$lin proved this theorem for even wider class of functions $f$.

It is not hard to verify, that if $f_1$  is Kolmogorov's function, then square partial
sums of the function $f(x_1, ..., x_N)=f_1(x_1)\in L_1(\mathbb{T}^N)$ diverge a.e. on $\mathbb{T}^N$.

Thus the problem of convergence a.e. of square partial sums of multiple Fourier series
is completely solved in classes $L_p(\mathbb{T}^N)$, $p\geq 1$: if $p> 1$
then we have the convergence and if $p=1$, then the
convergence a.e. fails. Hence, convergence a.e. of square partial sums of $L_p$-functions
does not depend on the number of dimensions and result has exactly the same formulation for $N>1$ as in one dimensional case.
However, for rectangular partial sums due to C. Fefferman \cite{Fef} we have a completely new phenomena.

\textbf{Theorem} (Charles Fefferman (1971)). \emph{There exists a
function $f\in C((\mathbb{T}^2)$, such that
$$
\overline{\lim\limits_{\min\{k,l\}\to\infty}}|R_{k,l}
f(x)|=+\infty
$$
everywhere on $\mathbb{T}^2$.}

The divergence in Fefferman's theorem is achieved precisely on
partial sums of the form (\ref{Ml}). Since $C(\mathbb{T}^2)\subset
L_2(\mathbb{T}^2)$, at first glance this contradicts equality
(\ref{Ml}). But of course there is no contradiction here. In
Fefferman's example at each point $(x_1, x_2)\in Q\subset \subset
\mathbb{T}^2$ partial sums $\widetilde{M}_kf(x_1, x_2)$ with
$m_1(k)=[kx_1]$ and $m_2(k)=[kx_2]$ diverge at $(x_1, x_2)$, but
according to (\ref{Ml}), converge a.e. on $\mathbb{T}^2$.

Obviously, if $f_1$  is the function from this theorem, then function $f(x_1, ..., x_N)=f_1(x_1, x_2)$ gives a counterexample
for any dimensional case.
Since $C(\mathbb{T}^N)\subset L_p(\mathbb{T}^N)$ for any $p\geq 1$, then for rectangular partial sums
the problem of a.e. convergence has the following solution: for any $p\geq 1$
the
convergence a.e. fails in $L_p(\mathbb{T}^N)$ classes.

It may be worth mentioning that for rectangular partial sums  there are examples of functions
with high smoothness for which the classical principle of localization is not valid
(see \cite{AAP}). Therefore, for these partial sums too (as in case of spherical partial sums)
the principle of generalized localization may become a replacement for the classical principle of localization.

I. Bloshanskiy intensively and in detail studied the generalized
localization for rectangular partial sums (see \cite{AAP}). In
particular he proved \cite{Bl}, that generalized localization of
rectangular partial sums belongs to those properties which
discriminate between two- and three- dimensional Fourier series.
Namely, he showed that generalized localization holds in the
classes $L_p(\mathbb{T}^N)$, $p>1$, for $N=2$ but not for $N\geq
3$.

If $p=1$, generalized localization  for rectangular partial sums
does not hold for any $N\geq 2$. To prove this statement it is
sufficient to consider function $f(x)=f_1(x_1)\cdot f_2(x_2 , ...,
x_N)$, where $f_1$ is Kolmogorov's example of function in
$L_1(\mathbb{T})$ with an unboundedly divergent Fourier series and
$f_2$ is a $L_1$-function which is equal to zero in a neighborhood
of the origin.

\section{Generalized localization principle for the multiple Fourier integrals}

\textbf{1. The Carbery-Soria theorem.} For the spherical partial sums (\ref{SL}) the Luzin's conjecture is still open.
But, as we will see the next section, the problem of generalized localization principle
has a positive solution.

By the definition of Fourier coefficients, one can write
$$
S_\lambda f(x)= \int\limits_{\mathbb{T}^N}\theta(x-y,\lambda) f(y)dy,
$$
where the kernel has the form
$$
\theta(x,\lambda)=(2\pi)^{-N}\sum\limits_{|n|^2<\lambda}e^{inx}.
$$

To investigate convergence of $S_\lambda f(x)$ when
$\lambda\rightarrow \infty$ one has to investigate the asymptotic
behavior of $\theta(x,\lambda)$ for large $\lambda$. Of course,
this is not an easy problem.  But if we change the sum into an
integral, then we have the explicit form of the kernel:
$$
e(x,\sqrt{\lambda})=(2\pi)^{-N}\int\limits_{|\xi|^2<\lambda}e^{ix\xi}d\xi=C\frac{J_{\frac{N}{2}}(|x|\sqrt{\lambda})}{(|x|\sqrt{\lambda})^{N/2}},
$$
where $J_\nu (t)$ is the Bessel function.
On the other hand, $ e(x,\lambda)$ is the kernel of  the integral operator
\begin{equation}\label{EL}
E_\lambda f(x)=\int\limits_{\mathbb{R}^N}e(x-y,\lambda)
f(y)dy=(2\pi)^{-\frac{N}{2}}\int\limits_{|\xi|<\lambda}\hat{f}(\xi)e^{ix\xi}d\xi,
\end{equation}
which is the spherical partial integrals of the multiple Fourier
integral of a function $f\in L_2(\mathbb{R}^N)$. Here a Fourier transform of function $f$ is defined as
$$
\hat{f}(\xi)=(2\pi)^{-\frac{N}{2}}\int\limits_{\mathbb{R}^N} f(x)e^{-ix\xi}dx.
$$

For the spherical partial integrals of multiple Fourier integrals
the generalized localization principle in $L_p(\mathbb{R}^N)$ has been
investigated by many authors from 1983 till 2016
(Sj\"{o}lin, P., Carbery, A., Soria F., Rubio de Francia,
J. L., Vega, L.,  Bastys A., and others (see \cite {SJ} - \cite{AB2}).

In particular,  in the fundamental paper \cite{CAR} the following  theorem has been proved

\textbf{Theorem} (Carbery, Soria (1988)). \emph{Let $f\in L_p(\mathbb{R}^N)$,
$2\leq p<2N/(N-1)$ and $f=0$ on an open set $\Omega\subset \mathbb{R}^N$.
Then }
\begin{equation}\label{E0}
\lim\limits_{\lambda \to \infty} E_\lambda f(x) =  0
\end{equation}
\emph{holds a.e. on }$\Omega$.

An earlier variant of this theorem where $f\in L_2(\mathbb{R}^N)$ was assumed to have a compact support
was obtained by Sj\"{o}lin, \cite{SJ}.

We prove  Sj\"{o}lin's theorem here by the method of Carbery and
Soria; the proof of the above theorem (Carbery and Soria) is
significantly more complicated. It may be worth mentioning that
the original proof of Sj\"{o}lin is complectly different.

Let $f\in L_2(\mathbb{R}^N)$ and $f$ have a compact support. In these
conditions we must prove that the equality (\ref{E0}) holds a.e.
on $\Omega = \mathbb{R}^N \setminus \ supp f$. If $x\in\Omega$ an arbitrary point, then to do this
it suffices to show validity of (\ref{E0}) a.e. on a ball with
center at $x$ and sufficiently small radius $R$, so that this ball
belongs to $\Omega$. Therefore without loss of generality we may
suppose, that $f$ is supported outside of this ball or assuming $\{|x|< R\}\subset \Omega$, $f$ is supported in $\{R\leq |x|\leq A\}$, and
prove convergence to zero of $E_\lambda f(x)$ a.e. on the ball
$\{|x|<r\}$ for any $r<R$. But this statement can be proved by a
standard technique based on the following theorem  (see \cite{ST}).

\begin{thm} Let $ E_\star f(x)=\sup\limits_{\lambda>0}|E_\lambda
f(x)|$ be the maximal operator.  Then for any $r<R$ there exists a
constant $C=C(R,r)$, such that
\begin{equation}\label{MO}
\int\limits_{|x|\leq r}|E_\star f(x)|^2 dx\leq C
\int\limits_{\mathbb{R}^N}|f(x)|^2 dx.
\end{equation}
\end{thm}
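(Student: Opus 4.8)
The plan is to exploit the single feature that separates this (true) local inequality from the (false) global maximal bound: $f$ is supported in $\{R\le |y|\le A\}$ while $E_\star f$ is measured only on $\{|x|\le r\}$ with $r<R$, so the kernel $e(x-y,\lambda)$ is always evaluated at $|x-y|\ge\delta:=R-r>0$. First I would record what this does \emph{not} buy us: a pointwise bound is hopeless, since by the large-argument asymptotics of the Bessel function $e(z,\lambda)$ oscillates in $\lambda$ with amplitude growing like $\lambda^{(N-1)/2}$, so $\sup_\lambda|e(z,\lambda)|=+\infty$ and the triangle-inequality estimate $E_\star f(x)\le\int\sup_\lambda|e(x-y,\lambda)|\,|f(y)|\,dy$ simply diverges. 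The gain from $|x-y|\ge\delta$ is instead that we stay in the oscillatory regime of the kernel, which I will convert into an $L^2$ gain through cancellation rather than absolute values.

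The main step is the standard linearisation of the supremum. Since $E_\lambda f(x)\to 0$ as $\lambda\to 0$ (the frequency domain shrinks to a point and $\hat f$ is continuous), the fundamental theorem of calculus gives $|E_\lambda f(x)|^2=2\,\mathrm{Re}\int_0^\lambda \overline{E_sf(x)}\,\partial_s E_s f(x)\,ds$, and after a Cauchy--Schwarz split with weights $s^{\mp1/2}$,
\[
E_\star f(x)^2\le 2\Big(\int_0^\infty|E_\lambda f(x)|^2\,\tfrac{d\lambda}{\lambda}\Big)^{1/2}\Big(\int_0^\infty|\partial_\lambda E_\lambda f(x)|^2\,\lambda\,d\lambda\Big)^{1/2}.
\]
Integrating over $\{|x|\le r\}$ and applying Cauchy--Schwarz once more in $x$ reduces (\ref{MO}) to the two weighted square-function estimates
\[
\int_{|x|\le r}\!\int_0^\infty|E_\lambda f(x)|^2\,\tfrac{d\lambda}{\lambda}\,dx\le C\|f\|_2^2,
\]
\[
\int_{|x|\le r}\!\int_0^\infty|\partial_\lambda E_\lambda f(x)|^2\,\lambda\,d\lambda\,dx\le C\|f\|_2^2 .
\]

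For the first estimate I would use Plancherel, $\int_{\mathbb{R}^N}|E_\lambda f|^2\,dx=\int_{|\xi|<\lambda}|\hat f|^2$, on the bounded low-frequency range, and the decay of $E_\lambda f$ on the ball as $\lambda\to\infty$ (which again rests on $|x-y|\ge\delta$) to make the high-frequency tail of $\int(\cdot)\,d\lambda/\lambda$ converge. For the second estimate I would fix $\lambda$, set $T_\lambda f=\partial_\lambda E_\lambda f|_{\{|x|\le r\}}$, and run a $TT^*$/almost-orthogonality argument: using $|x-y|\ge\delta$ to replace $\partial_\lambda e$ by its Bessel asymptotics, the kernel on $\mathrm{supp}\,f\times\mathrm{supp}\,f$ of the positive operator $\int_0^\infty T_\lambda^*T_\lambda\,\lambda\,d\lambda$ is $\int_0^\infty\!\int_{|x|\le r}\partial_\lambda e(x-y,\lambda)\,\overline{\partial_\lambda e(x-y',\lambda)}\,dx\,\lambda\,d\lambda$, whose $\lambda$-integral sees the oscillatory product $\cos(\lambda|x-y|-\theta)\cos(\lambda|x-y'|-\theta)$. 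Its cancellation in $\lambda$, concentrated near $|x-y|=|x-y'|$, is what turns the apparent growth into a Schur-summable kernel and produces the gap-dependent constant $C=C(R,r)$.

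The hard part will be precisely this second square function. Unlike $E_\lambda f$, the object $\partial_\lambda E_\lambda f(x)=(2\pi)^{-N/2}\int_{|\xi|=\lambda}\hat f(\xi)e^{ix\xi}\,d\sigma(\xi)$ is the inverse transform of a measure carried by the sphere $\{|\xi|=\lambda\}$; it is not square-integrable over all of $\mathbb{R}^N$, so the restriction to $\{|x|\le r\}$ is indispensable, and the kernel amplitude genuinely grows like $\lambda^{(N-1)/2}$. No absolute-value argument can succeed: the whole estimate must come from integrating the phase $\lambda(|x-y|-|x-y'|)$ by parts in $\lambda$ (legitimate off the diagonal $|x-y|=|x-y'|$) together with the $dx$-average over the ball near that diagonal, i.e. from a genuine stationary-phase analysis. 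Extracting off-diagonal decay in $\big||x-y|-|x-y'|\big|$ strong enough to pass a Schur test is the technical heart of the argument; once it is in hand, the first square function and the final assembly through the two displayed reductions are routine.
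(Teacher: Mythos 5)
Your overall strategy (fundamental-theorem-of-calculus linearisation of the supremum, then Plancherel) is the right family of ideas, but the specific weighted split you choose reduces the theorem to a \emph{false} statement. The second of your two displayed estimates,
$\int_{|x|\le r}\int_0^\infty|\partial_\lambda E_\lambda f(x)|^2\,\lambda\,d\lambda\,dx\le C\|f\|_2^2$,
is an $H^{1/2}$-type bound, not an $L_2$ bound, and it fails for admissible data. Indeed, for radial $f$ with $\hat f(\xi)=F(|\xi|)$ one has $\partial_\lambda E_\lambda f(x)=cF(\lambda)\lambda^{N-1}J_{(N-2)/2}(\lambda|x|)(\lambda|x|)^{-(N-2)/2}$, and the Bessel asymptotics give the two-sided comparison
$\int_0^\infty\lambda\int_{|x|\le r}|\partial_\lambda E_\lambda f|^2\,dx\,d\lambda\ \approx\ c_r\int_{\mathbb{R}^N}|\hat f(\xi)|^2|\xi|\,d\xi$.
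Taking $f=\mathbf{1}_{\{R\le|x|\le A\}}$ (radial, in $L_2$, supported in the annulus, but with $|\hat f(\xi)|^2\approx|\xi|^{-(N+1)}$ on average, hence just outside $H^{1/2}$) makes this quantity $+\infty$ while $\|f\|_2<\infty$; a modulated bump $f_M(x)=e^{iMx_1}\phi(x)$ shows the failure quantitatively, with left-hand side $\approx M\|f_M\|_2^2$. Consequently the $TT^*$/stationary-phase programme you defer to as ``the technical heart'' is aimed at an unprovable estimate. Nor can you retreat to the product form before splitting: for the same indicator function your chain $\int_{|x|\le r}E_\star f^2\le 2\bigl(\int G_1^2\bigr)^{1/2}\bigl(\int G_2^2\bigr)^{1/2}$ ends at $+\infty$, so the reduction is vacuous (Cauchy--Schwarz in $x$ runs the wrong way to recombine the two factors into $\|f\|_2^2$ when $\hat f$ is spread over many dyadic shells). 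Your sketch of the first estimate has a softer version of the same problem: global Plancherel alone log-diverges at $\lambda=\infty$, and the ``decay of $E_\lambda f$ on the ball'' you invoke has no rate for mere $L_2$ data --- though that estimate, unlike the second, is true.

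The idea you are missing, and which the paper uses, is to transfer the support separation into the kernel \emph{before} any linearisation: set $e_\lambda(x)=e(x,\lambda)\chi(x)$, where $\chi\in C_0^\infty$ is radial, vanishes for $|x|<\tfrac13(R-r)$ and equals $1$ for $\tfrac23(R-r)<|x|\le 2A$, so that $E_\lambda f=e_\lambda\ast f$ on $\{|x|\le r\}$. The whole content is then Lemma \ref{ft}: $|\tfrac{d^j}{d\lambda^j}\widehat{e}_\lambda(\eta)|\le C_l(1+||\eta|-\lambda|)^{-l}$ for $j=0,1$, i.e.\ on the Fourier side the truncated kernel and its $\lambda$-derivative live in an $O(1)$-thick shell around $\{|\eta|=\lambda\}$, uniformly in $\lambda$; this comes from $\chi(0)=0$ and the rapid decay of $\widehat{\chi}$ --- no Bessel asymptotics or stationary phase at all. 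With this in hand the paper uses the \emph{unweighted} split $2ab\le a^2+b^2$:
$\sup_{\lambda}|e_\lambda\ast f|^2\le\int_0^\infty|e_t\ast f|^2dt+\int_0^\infty|\partial_t(e_t\ast f)|^2dt$,
and by Plancherel each term is at most $C\int|\hat f(\eta)|^2\bigl(\int_0^\infty|\partial_t^j\widehat{e}_t(\eta)|^2dt\bigr)d\eta$, where both $t$-integrals are uniformly bounded in $\eta$ (Corollary \ref{INT}), since integrating a rapidly decaying bump of unit width in $t$ costs $O(1)$ regardless of $|\eta|$. Your weights $t^{\mp1}$ destroy precisely this balance, turning the two shell integrals into $\approx|\eta|^{-1}$ and $\approx|\eta|$ respectively; the unweighted choice is the one matched to the geometry, and it makes both square functions individually true, so the theorem follows at once.
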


First we prove some auxiliary assertions. Let $\chi (x)\in C_0^\infty (\mathbb{R}^N)$ be a radial function, such that
$$
\chi(x)=
\left\{
\begin{array}{l}
0,\,\,\,\,|x|<\frac{1}{3}(R-r),\\
1,\,\,\,\,\frac{2}{3}(R-r)< |x|\leq 2 A.
\end{array}
\right.
$$
If we denote $e_\lambda(x)=e(x,\lambda) \chi(x)$, then we have
$$
E_\lambda f(x)=\int\limits_{\mathbb{R}^N}e_\lambda(x-y)f(y)dy, \,\,for
\,\,all\,\, x,\,\,with \,\,|x|\leq r,
$$
since $f$ is supported in $\{R\leq |x|\leq A\}$. Therefore to
prove estimate (\ref{MO}) it suffices to obtain the inequality
\begin{equation}\label{e}
\int\limits_{\mathbb{R}^N}\sup_{\lambda >\,0}\left|e_\lambda\ast f\right|^2dx\leq
C\int\limits_{\mathbb{R}^N}|f(x)|^2dx.
\end{equation}
Now we need some estimates for the Fourier transform of the
function $e_\lambda(x)$, which we denote by
$\widehat{e}_\lambda(\eta)$. These estimates were obtained in
\cite{CAR} and for the convenience of readers we will give the
proof from \cite{CAR} here.
\begin{lem}\label{ft}
For  $j=0,1$ and for an arbitrary $l\in \mathbb{N}$ there exists a
constant $C_l$, depending on $l, \, r $ and $R$, such that for all
$\lambda\in \mathbb{R}$ and $\eta\in \mathbb{R}^N$ one has
$$
|\frac{d^j}{d\lambda^j}\widehat{e}_\lambda(\eta)|\leq\frac{C_l}{(1+||\eta|-\lambda|)^l}.
$$
\end{lem}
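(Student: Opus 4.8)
The plan is to move everything to the Fourier side and exploit that $\widehat{e}(\cdot,\lambda)$ is, up to a constant, the indicator of the ball $\{|\xi|<\lambda\}$. Since $E_\lambda$ has Fourier multiplier $\mathbf{1}_{\{|\xi|<\lambda\}}$ and $e(\cdot,\lambda)$ is its convolution kernel, one reads off $\widehat{e}(\cdot,\lambda)(\xi)=(2\pi)^{-N/2}\mathbf{1}_{\{|\xi|<\lambda\}}$. Because $\chi\in C_0^\infty(\mathbb{R}^N)$, its transform $\widehat{\chi}$ is a Schwartz function, so the convolution theorem gives
$$
\widehat{e}_\lambda(\eta)=(2\pi)^{-N}\int_{|\xi|<\lambda}\widehat{\chi}(\eta-\xi)\,d\xi .
$$
I will estimate this integral ($j=0$) and its $\lambda$-derivative ($j=1$) separately; throughout I write $d=||\eta|-\lambda|$ for the distance from $\eta$ to the sphere $\{|\xi|=\lambda\}$, use the bound $|\widehat{\chi}(\zeta)|\le C_M(1+|\zeta|)^{-M}$ valid for every $M$, and note that for $\lambda\le 0$ the kernel vanishes so nothing is to be proved.

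The decisive observation for $j=0$ is that $\chi$ vanishes near the origin, whence $\int_{\mathbb{R}^N}\widehat{\chi}(\zeta)\,d\zeta=(2\pi)^{N/2}\chi(0)=0$. If $|\eta|\ge\lambda$, then every $\xi$ in the ball satisfies $|\eta-\xi|\ge|\eta|-\lambda=d$, so the substitution $\zeta=\eta-\xi$ and the Schwartz bound give $|\widehat{e}_\lambda(\eta)|\le C_M\int_{|\zeta|\ge d}(1+|\zeta|)^{-M}\,d\zeta\le C_l(1+d)^{-l}$ upon choosing $M=l+N$. If instead $|\eta|<\lambda$, the ball itself lies close to $\eta$ and the direct estimate fails; here I first use the vanishing integral to replace the ball by its complement,
$$
\widehat{e}_\lambda(\eta)=-(2\pi)^{-N}\int_{|\xi|\ge\lambda}\widehat{\chi}(\eta-\xi)\,d\xi ,
$$
on which $|\eta-\xi|\ge|\xi|-|\eta|\ge\lambda-|\eta|=d$, so the same computation applies. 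This disposes of $j=0$.

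For $j=1$ I differentiate under the integral sign: writing the ball integral in polar form and applying the fundamental theorem of calculus produces the surface integral $\tfrac{d}{d\lambda}\widehat{e}_\lambda(\eta)=(2\pi)^{-N}\int_{|\xi|=\lambda}\widehat{\chi}(\eta-\xi)\,d\sigma(\xi)$. Now the mean-zero device is unavailable, and the genuine difficulty is that the sphere has area of order $\lambda^{N-1}$, which must not destroy uniformity in $\lambda$. The remedy is the purely geometric fact that the spherical cap $\{|\xi|=\lambda:\ |\eta-\xi|\le s\}$ has surface measure at most $C_N s^{N-1}$, with $C_N$ independent of $\lambda$ and $\eta$. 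Combining this with the lower bound $|\eta-\xi|\ge d$ valid on the whole sphere, a dyadic decomposition into the shells $|\eta-\xi|\approx 2^k$ (only those with $2^k\gtrsim\max(d,1)$ contributing) yields a bound $\sum_{2^k\gtrsim\max(d,1)}2^{k(N-1)}2^{-kM}\lesssim(1+d)^{-(M-N+1)}$, which is $\le C_l(1+d)^{-l}$ once $M=l+N-1$. I expect precisely this cap-measure estimate, that is, controlling the surface integral uniformly in the large radius $\lambda$, to be the only delicate point; the case $j=0$ is elementary once the normalization $\chi(0)=0$ is exploited.
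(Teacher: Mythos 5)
Your treatment of $j=0$ coincides with the paper's own proof: the same identity $\widehat{e}_\lambda(\eta)=(2\pi)^{-N}\int_{|\xi|<\lambda}\widehat{\chi}(\eta-\xi)\,d\xi$, the same direct Schwartz-decay estimate when $|\eta|\geq\lambda$, and the same use of $\chi(0)=0$ (hence $\int\widehat{\chi}\,d\xi=0$) to pass to the complementary region when $|\eta|<\lambda$; your exponent choice $M=l+N$ is exactly the paper's passage from $j$ to $l$. The genuine difference is at $j=1$: the paper gives no argument there, saying only that this case is ``handled similarly (see \cite{CAR})'', whereas you supply a complete and correct one. Differentiating the radial integral yields the surface integral $(2\pi)^{-N}\int_{|\xi|=\lambda}\widehat{\chi}(\eta-\xi)\,d\sigma(\xi)$; on the sphere one has $|\eta-\xi|\geq||\eta|-\lambda|$; and your cap bound $\sigma\{|\xi|=\lambda:\ |\eta-\xi|\leq s\}\leq C_N s^{N-1}$ does hold uniformly in $\lambda$ and $\eta$ (such a cap has chordal radius $O(s)$, and a cap of chordal radius $\rho$ on a sphere of any radius has measure at most $C_N\rho^{N-1}$), so the dyadic summation with $M=l+N-1$ closes the estimate. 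You are also right to identify this as the one delicate point: ``similarly'' cannot be read literally, since the mean-zero device is unavailable for the surface integral, and it is precisely the uniform $(N-1)$-dimensional density of the sphere that replaces it --- this is the content that the citation to Carbery--Soria conceals. In short: same route as the paper where the paper actually proves something, plus a correct self-contained argument where the paper defers to \cite{CAR}.
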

\begin{proof}We shall consider the case $j=0$, the case $j=1$ being handled similarly (see  \cite{CAR}).
By the definition of the Fourier transform we have
$$
\widehat{e}_\lambda(\eta)=(2\pi)^{-\frac{3}{2}N}\int\limits_{\mathbb{R}^N}\big(\int\limits_{|\xi|<\lambda}e^{ix\xi}d\xi\chi(x)\big)e^{-ix\eta}
dx=
(2\pi)^{-N}\int\limits_{|\xi|<\lambda}(2\pi)^{-\frac{N}{2}}\int\limits_{\mathbb{R}^N}\chi(x)e^{-ix(\eta-\xi)}dxd\xi=
$$
$$
=
(2\pi)^{-N}\int\limits_{|\xi|<\lambda}\widehat{\chi}(\eta-\xi)d\xi=(-2\pi)^{-N}\int\limits_{|\eta-\xi|<\lambda}\widehat{\chi}(\xi)d\xi.
$$
First suppose $|\eta|<\lambda$. Since $\chi(0)=0$ and for any
integer $j$ one has the estimate $|\widehat{\chi}(\xi)|\leq
C_j(1+|\xi|)^{-j}$, then
$$
|\widehat{e}_\lambda(\eta)|=|-(-2\pi)^{-N}\int\limits_{|\eta-\xi|\geq\lambda}\widehat{\chi}(\xi)d\xi|\leq
C_j\int\limits_{|\eta-\xi|\geq\lambda}(1+|\xi|)^{-j}d\xi\leq
$$
$$
\leq
C_j\int\limits_{|\xi|>\lambda-|\eta|}(1+|\xi|)^{-j}d\xi\leq\frac{C_l}{(1+||\eta|-\lambda|)^l}.
$$
Similarly, if $|\eta|>\lambda$,
$$
|\widehat{e}_\lambda(\eta)|=|(-2\pi)^{-N}\int\limits_{|\eta-\xi|<\lambda}\widehat{\chi}(\xi)d\xi|\leq
C_j\int\limits_{|\xi|>|\eta|-\lambda}(1+|\xi|)^{-j}d\xi\leq\frac{C_l}{(1+||\eta|-\lambda|)^l}.
$$

\end{proof}
It is not hard to verify that form this lemma one has
\begin{cor}\label{INT} Let $j=0, 1$. Uniformly in $\eta\in\mathbb{R}^N$ we have
$$
\int\limits_0^\infty
|\frac{d^j}{d\lambda^j}\widehat{e}_\lambda(\eta)|^2d\lambda < C.
$$
\end{cor}
Now we are ready to prove estimate (\ref{e}). Since
$$
[e_\lambda\ast f]^2=2\int\limits_0^\lambda (e_t\ast f)
\frac{d}{dt}(e_t\ast f)dt
$$
and $2ab \leq a^2+b^2$
$$
\sup_{\lambda >\,0}\left|e_\lambda\ast
f\right|^2\leq\int\limits_0^\infty|e_t\ast
f|^2dt+\int\limits_0^\infty|\frac{d}{dt}e_t\ast f|^2dt,
$$
or
$$
\int\limits_{\mathbb{R}^N}\sup_{\lambda >\,0}\left|e_\lambda\ast
f\right|^2\leq\int\limits_{\mathbb{R}^N}|\widehat{f}(\eta)|^2\int\limits_0^\infty
|\widehat{e}_t(\eta)|^2dtd\eta
+\int\limits_{\mathbb{R}^N}|\widehat{f}(\eta)|^2\int\limits_0^\infty
|\frac{d}{dt}\widehat{e}_t(\eta)|^2dtd\eta.
$$
Finally, making use of Corollary \ref{INT} and the fact that $f$
is an $L_2$ - function we obtain estimate (\ref{e}).

Thus theorem of Carbery and Soria is proved for functions $f\in
L_2(\mathbb{R}^N)$ with compact support.

Let $A(\xi)=\sum\limits_{|\alpha|= m}a_\alpha \xi^\alpha$ be an
arbitrary elliptic homogeneous polynomial on $\xi\in \mathbb{R}^N$
with constant coefficients $a_\alpha$, i.e. $A(\xi)>0$ for all
$\xi\neq 0$.

With this polynomial $A(\xi)$ we can define a partial sums of the
multiple Fourier integrals as
$$
E(A, \lambda, f)(x)= (2\pi)^{-\frac{N}{2}} \int\limits_{A(\xi)<
\lambda} \hat{f}(\xi)e^{ix \xi} d\xi.
$$

Note, if $A(\xi)=|\xi|^2$, then the partial sums $E(A, \lambda,
f)(x)$ coincides  with the spherical partial sums
$E_{\sqrt{\lambda}} f(x)$.

In the paper \cite{AAA}, using the method of Carbery-Soria
\cite{CAR}, the following theorem is proved.

\begin{thm} Let $A(\xi)$ be an arbitrary elliptic polynomial. Let  $f\in L_p(\mathbb{R}^N)$,
$2\leq p<2N/(N-1)$ and $f=0$ on an open set $\Omega\subset
\mathbb{R}^N$. Then $ \lim_{\lambda \to \infty} E(A, \lambda,
f)(x) =  0 $ holds a.e. on $\Omega$.
\end{thm}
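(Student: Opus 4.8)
The plan is to repeat, for the family $E(A,\lambda,\cdot)$, the Carbery--Soria argument just carried out for Sj\"olin's theorem, using the homogeneity $A(t\xi)=t^mA(\xi)$ and the ellipticity bound $c_1|\xi|^m\le A(\xi)\le c_2|\xi|^m$ in place of the rotational symmetry of the Euclidean ball. The former turns every region $\{A<\lambda\}=\lambda^{1/m}\{A<1\}$ into a linear dilate of one fixed body, and the latter keeps its geometry comparable to that of a sphere. First I would localize: it suffices to fix a point $x_0\in\Omega$ and a ball $\{|x-x_0|<R\}\subset\Omega$ and to prove that $E(A,\lambda,f)(x)\to0$ for a.e. $x\in\{|x-x_0|<r\}$, $r<R$. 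Taking $x_0=0$, I would split $f=f_1+f_2$ with $f_1=f\mathbf{1}_{\{|y|\le T\}}$ and $f_2=f\mathbf{1}_{\{|y|>T\}}$; here $f_1$ has bounded support and hence lies in $L_2$ (since $p\ge2$), while $f_2$ is the tail.

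For $f_1$ I would reproduce the argument above. With a cutoff $\chi$ as before (now adapted to the annulus where $f_1$ lives) and the truncated kernel $e_\lambda(x)=e(x,\lambda)\chi(x)$, where $e(\cdot,\lambda)$ is the kernel of $E(A,\lambda,\cdot)$, one has $E(A,\lambda,f_1)=e_\lambda\ast f_1$ on $\{|x|\le r\}$, so everything reduces to the analogue of the maximal estimate (\ref{e}). The heart is the analogue of Lemma \ref{ft}: repeating its Fourier-transform computation gives $\widehat e_\lambda(\eta)=-(2\pi)^{-N}\int_{A(\eta-\zeta)\ge\lambda}\widehat\chi(\zeta)\,d\zeta$ when $A(\eta)<\lambda$ (using $\chi(0)=0$, i.e. $\int\widehat\chi=0$). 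Since $A(\eta)-A(\eta-\zeta)=\int_0^1\nabla A(\eta-s\zeta)\cdot\zeta\,ds$ and $|\nabla A(v)|\le C|v|^{m-1}$, the domain of integration forces $|\zeta|\ge c\,|\lambda-A(\eta)|\lambda^{-(m-1)/m}$, a quantity that ellipticity identifies with $\mathrm{dist}(\eta,\{A=\lambda\})$; together with the rapid decay of $\widehat\chi$ this yields $|\widehat e_\lambda(\eta)|\le C_l(1+\mathrm{dist}(\eta,\{A=\lambda\}))^{-l}$. The decisive move is to reparametrize $\lambda=t^m$: the level sets become the linear dilates $t\{A=1\}$, so $\mathrm{dist}(\eta,t\{A=1\})\approx|t-A(\eta)^{1/m}|$ uniformly in the direction of $\eta$, and the chain-rule factor $t^{m-1}$ exactly cancels the Jacobian weight $|\nabla A|^{-1}\approx t^{-(m-1)}$ appearing when one differentiates $\widehat e_{t^m}$. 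Hence
$$
\Big|\frac{d^j}{dt^j}\widehat e_{t^m}(\eta)\Big|\le\frac{C_l}{\big(1+|t-A(\eta)^{1/m}|\big)^l},\qquad j=0,1,
$$
which gives the analogue of Corollary \ref{INT}, $\int_0^\infty|\frac{d^j}{dt^j}\widehat e_{t^m}(\eta)|^2\,dt<C$ uniformly in $\eta$. The fundamental-theorem-of-calculus identity in the variable $t$, followed by $2ab\le a^2+b^2$ and Plancherel exactly as before, then gives the local maximal bound and with it $E(A,\lambda,f_1)\to0$ a.e. on $\{|x|<r\}$.

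The tail $f_2$ is where $p>2$ genuinely enters and where, as the text warns, the Carbery--Soria argument becomes significantly more complicated. Here Plancherel is unavailable, and in place of (\ref{e}) one must establish a \emph{local} (weighted) $L_2$ bound for $\sup_t|E(A,t^m,f_2)(x)|$ on $\{|x|<r\}$ of the form $C\|f_2\|_{L_p}$, with $C$ controlled by the (positive) distance from $\{|x|<r\}$ to $\mathrm{supp}\,f_2$. Granting such a bound, and noting that the $f_1$ part already forces $\limsup_\lambda|E(A,\lambda,f)|\le\limsup_\lambda|E(A,\lambda,f_2)|$ a.e. on $\{|x|<r\}$, letting $T\to\infty$ sends $\|f_2\|_{L_p}\to0$ and removes the tail, completing the proof. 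I expect this $L_p$ step to be the main obstacle: it requires the full Carbery--Soria weighted machinery, and it is here that the admissible range $2\le p<2N/(N-1)$ is forced. Throughout, the role played in the spherical case by the explicit Bessel asymptotics of the kernel is taken over by the decay of the Fourier transform of $\mathbf{1}_{\{A<1\}}$, read off from the scaling $e(x,\lambda)=\lambda^{N/m}K(\lambda^{1/m}x)$; since only this homogeneity-driven scaling and the ellipticity of $A$ are used, no curvature hypothesis on the surface $\{A=1\}$ is needed, and the conclusion holds for an arbitrary elliptic $A$.
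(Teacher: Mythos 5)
Your treatment of the compactly supported piece $f_1$ is a sound adaptation of the Carbery--Soria argument that the paper itself displays for the spherical case: the truncated kernel $e_\lambda=e(\cdot,\lambda)\chi$, the identity $\widehat e_\lambda(\eta)=-(2\pi)^{-N}\int_{A(\eta-\zeta)\ge\lambda}\widehat\chi(\zeta)\,d\zeta$ for $A(\eta)<\lambda$, the gradient bound $|\nabla A(v)|\le C|v|^{m-1}$ forcing $|\zeta|\ge c\,|\lambda-A(\eta)|\,\lambda^{-(m-1)/m}$, and especially the reparametrization $\lambda=t^m$, which is the correct move to make the analogues of Lemma \ref{ft} and Corollary \ref{INT} hold uniformly before applying the fundamental-theorem-of-calculus trick and Plancherel. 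For what it is worth, the paper does not prove this theorem either: it quotes it from \cite{AAA}, and the only argument actually written out in the paper is the one for $f\in L_2(\mathbb{R}^N)$ with compact support, which is precisely what your $f_1$ step reconstructs (for the ball; your observation that only ellipticity and homogeneity are needed there is correct, since that part uses nothing but Plancherel and the kernel estimates).

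The genuine gap is the tail $f_2$. You reduce the theorem to a local maximal inequality of the form $\bigl\|\sup_{t>0}|E(A,t^m,f_2)|\bigr\|_{L_2(\{|x|<r\})}\le C\,\|f_2\|_{L_p}$, with $C$ uniform as $T\to\infty$, and then proceed by ``granting such a bound.'' But that bound \emph{is} the theorem: it is exactly where the restriction $p<2N/(N-1)$ enters, it cannot come from Plancherel, and it is the part that Carbery and Soria \cite{CAR} (and, for general elliptic $A$, the authors of \cite{AAA}) establish through weighted $L_2$ inequalities with power weights for the maximal multiplier operator --- the portion the paper itself calls ``significantly more complicated.'' Moreover, for arbitrary elliptic $A$ this step is not a routine transcription of the spherical case: the spherical weighted estimates lean on Bessel-function asymptotics of the kernel, whereas the level surface $\{A=1\}$ may have points of vanishing curvature and the Fourier transform of the indicator of $\{A<1\}$ then decays anisotropically; so your closing assertion that ``no curvature hypothesis is needed'' is, for the $L_p$ part, exactly the thing requiring proof rather than a consequence of scaling. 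As written, your argument proves the statement only for compactly supported $f\in L_2$ (and, with an additional uniformity-in-$T$ argument for the constants, for all of $L_2$); for $2<p<2N/(N-1)$ the decisive estimate is assumed, not proved.
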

So in case of an arbitrary elliptic polynomial $A(\xi)$ we have
the same result as for the spherical partial sums $E_\lambda
f(x)$. We note that, unlike to the generalized localization, the
conditions for the classical Riemann localization of $E(A,
\lambda, f)(x)$ strictly depends on the geometry of the set
$\{\xi\in \mathbb{R}^N: A(\xi)<1\}$, namely on the number of
nonzero curvatures of the level surface $\{\xi\in \mathbb{R}^N:
A(\xi)=1\}$ (see \cite{AR1} and \cite{AR2}).

\textbf{2. Generalized localization for distributions.} We recall the Schwartz space $S$ is the class $C^\infty(\mathbb{R}^N)$ being equipped
with a family of seminorms
$$
d_{\alpha,\beta}(\phi) = \sup_{x \in \mathbb{R}^N} |x^\alpha D^\beta
\phi(x)|.
$$
We also consider a class of tempered distributions $S'$ defined as
dual to $S$.

Along with $S$ we also consider $\mathcal{E}$, i.e. $C^\infty(\mathbb{R}^N)$
being equipped with a family of seminorms
$$
\rho_{\alpha, K} (\phi) = \sup_{x \in K}|D^\alpha \phi(x)|.
$$
As usual we denote its conjugate space by $\mathcal{E'}$.
Distributions from $\mathcal{E'}$ have the following important
properties:
\begin{itemize}
\item every $f\in \mathcal{E'}$ has a finite support, \item for any $f\in \mathcal{E'}$ one has
$\hat{f}\in C^\infty$ (the Paley-Wienner theorem), \item for every
$f \in \mathcal{E}'$ there is $l\in \mathbb{R}$ such that $f \in H^l$ .
\end{itemize}

\begin{defin}
We say that tempered distribution $f$ belongs to the Sobolev class
$H^l$ if $\hat{f}$ is a regular distribution such that
$$
\|f\|^2_{H^l} = \int |\hat{f}(\xi)|^2 (1+|\xi|^2)^l d\xi < \infty.
$$
\end{defin}
\textbf{Example.} Consider $D^\alpha \delta (x)\in \mathcal{E}'$, where $\delta (x)$ is the Dirac delta-function. We have $$\hat{(D^\alpha
\delta)} (\xi)=(-1)^{|\alpha|}\xi^\alpha.$$ Therefore $D^\alpha
\delta (x)\in H^{-l}$ with $l>\frac{N}{2}+|\alpha|$.

Since for any distribution $f \in \mathcal{E}'$ its Fourier transform is infinitely differentiable function,
then we can define the Riesz means of order $s\geq 0$ of the spherical partial integrals as follows
$$
E^s_\lambda
f(x)=(2\pi)^{-\frac{N}{2}}\int\limits_{|\xi|<\lambda}\big(1-\frac{|\xi|^2}{\lambda^2}\big)^s\hat{f}(\xi)e^{ix\xi}d\xi.
$$

The authors of the paper \cite {AB2} obtained the necessary and sufficient conditions
for the generalized localization of the Riesz means of distributions to be hold.
These conditions are formulated in the following two theorems.

\begin{thm}\label{D}Let $f \in \mathcal{E}'$$ \cap H^{-l}$, $l \geq 0$. Then for
$s\geq l$, equality
$$
\lim_{\lambda \to \infty} E^s_\lambda f(x) = 0
$$
holds true a.e. on $\mathbb{R}^N\setminus supp \ f$.
\end{thm}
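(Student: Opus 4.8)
The plan is to run the Carbery--Soria maximal function argument used above for $f\in L_2(\mathbb{R}^N)$ of compact support, but now weighted by the Sobolev norm; the order condition $s\ge l$ will enter at exactly one point, through the rate at which the Riesz multiplier vanishes on the sphere $|\xi|=\lambda$.

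First I would localize and reduce to a weighted maximal inequality. Fix $x_0\in\mathbb{R}^N\setminus\mathrm{supp}\,f$; since $f$ has compact support there is $R>0$ with $\{|x-x_0|\le R\}\cap\mathrm{supp}\,f=\emptyset$, and after a translation we may assume $x_0=0$. It then suffices to show, for every $r<R$,
\begin{equation*}
\int_{|x|\le r}|E^s_\star f(x)|^2\,dx\le C\,\|f\|^2_{H^{-l}},\qquad E^s_\star f(x)=\sup_{\lambda>0}|E^s_\lambda f(x)|.
\end{equation*}
Granting this, a.e. convergence to zero on $\{|x|<r\}$ follows by the usual density argument: mollifying, $f_k=f*\rho_{1/k}$ produces genuine $L_2$-functions in $\mathcal{E}'$ whose supports stay disjoint from $\{|x|\le r\}$ for large $k$ and which converge to $f$ in $H^{-l}$; for each $f_k\in L_2$ the conclusion already holds (the compactly supported $L_2$ case proved above gives it for $s=0$, hence a fortiori for $s\ge0$), and since $\limsup_{\lambda}|E^s_\lambda f|\le E^s_\star(f-f_k)$ on the off-support set, Chebyshev together with the displayed estimate forces $\limsup_{\lambda}|E^s_\lambda f|=0$ a.e. on $\{|x|<r\}$. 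Letting $r\uparrow R$ and varying $x_0$ yields the theorem.

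The maximal inequality I would treat exactly as before. Let $e^s(x,\lambda)$ be the kernel of $E^s_\lambda$, take the radial cutoff $\chi\in C_0^\infty$ with $\chi(0)=0$ as in Lemma~\ref{ft}, and set $e^s_\lambda(x)=e^s(x,\lambda)\chi(x)$. Because $f$ is supported where $\chi\equiv1$, one has $E^s_\lambda f(x)=(e^s_\lambda*f)(x)$ for $|x|\le r$, so it is enough to bound $\int_{\mathbb{R}^N}\sup_{\lambda>0}|e^s_\lambda*f|^2\,dx$. Using $\sup_{\lambda>0}|e^s_\lambda*f|^2\le\int_0^\infty|e^s_t*f|^2\,dt+\int_0^\infty|\partial_t(e^s_t*f)|^2\,dt$ and Plancherel (the $t$-independent factor $\hat f(\eta)$ coming out of the $t$-integral), the whole estimate reduces to the uniform-in-$\eta$ bound
\begin{equation*}
\int_0^\infty\Big(|\widehat{e^s_t}(\eta)|^2+\big|\tfrac{d}{dt}\widehat{e^s_t}(\eta)\big|^2\Big)\,dt\le C\,(1+|\eta|^2)^{-l},
\end{equation*}
which is the Riesz-means refinement of Corollary~\ref{INT}, the extra decay $(1+|\eta|^2)^{-l}$ being precisely what is needed to absorb the Sobolev weight.

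This last bound is the heart of the matter and the step I expect to be the main obstacle. The strategy is to prove a pointwise refinement of Lemma~\ref{ft} for $j=0,1$, starting again from $\widehat{e^s_\lambda}(\eta)=c\int_{|\xi|<\lambda}(1-|\xi|^2/\lambda^2)^s\,\widehat\chi(\eta-\xi)\,d\xi$ and exploiting both $\int\widehat\chi=c'\chi(0)=0$ and the rapid decay of $\widehat\chi$. The resulting estimate should carry two features: a localization in $\big||\eta|-\lambda\big|$ with arbitrarily fast polynomial decay (inherited from Lemma~\ref{ft}), together with a gain of size $(1+\lambda)^{-s}$ coming from the order-$s$ vanishing of $(1-|\eta|^2/\lambda^2)^s$ at $\lambda=|\eta|$; concretely I would aim for an estimate of the form
\begin{equation*}
\Big|\tfrac{d^j}{d\lambda^j}\widehat{e^s_\lambda}(\eta)\Big|\le C_M\,(1+\lambda)^{-s}\,\big(1+\big||\eta|-\lambda\big|\big)^{s-M}
\end{equation*}
for arbitrary $M$. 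Feeding this into the $t$-integral, the mass concentrates at $t\approx|\eta|$, where $(1+t)^{-s}\approx(1+|\eta|)^{-s}$ and the remaining factor is integrable in $t$ once $M$ is large; this gives $\int_0^\infty|\widehat{e^s_t}(\eta)|^2\,dt\lesssim(1+|\eta|)^{-2s}$, and likewise for the derivative. The right-hand side is $\le C(1+|\eta|^2)^{-l}$ exactly when $s\ge l$, which is where the hypothesis is consumed. The delicate points I anticipate are making the heuristic $(1-|\eta|^2/\lambda^2)^s\sim(\,\big||\eta|-\lambda\big|/\lambda\,)^s$ rigorous near the sphere (where the cancellation $\int\widehat\chi=0$ interacts with the variation of the Riesz factor across the support of $\widehat\chi$), controlling the small- and large-$\lambda$ ranges where $\widehat\chi(\eta-\xi)$ must supply the decay in $\eta$, and pushing the $j=1$ derivative through without spending the gain in $s$.
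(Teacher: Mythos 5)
You should know at the outset that the paper does not actually prove Theorem \ref{D}: it is quoted from \cite{AB2}, accompanied only by the remark that the Riesz means of order $s$ make a distribution ``roughly $s$-smoother.'' So your proposal can only be measured against the machinery the paper does develop, namely the Carbery--Soria proof of Sj\"olin's theorem (the cutoff $\chi$ vanishing near the origin, Lemma \ref{ft}, Corollary \ref{INT}, and the square-function/fundamental-theorem-of-calculus trick). Judged that way, your outline is the natural Sobolev-weighted extension of that argument and its architecture is sound: the reduction to $\int_{|x|\le r}|E^s_\star f|^2dx\le C\|f\|^2_{H^{-l}}$, the mollification-plus-Chebyshev density step, Plancherel in the $t$-integral, and the identification of the single point where $s\ge l$ is consumed, via $\int_0^\infty|\widehat{e^s_t}(\eta)|^2dt\lesssim(1+|\eta|)^{-2s}\le C(1+|\eta|^2)^{-l}$, which is precisely the quantitative content of the paper's ``$s$-smoother'' remark.

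There is, however, one genuine inaccuracy at what you correctly call the heart of the matter. The cancellation you invoke, $\int\widehat\chi=c'\chi(0)=0$, is \emph{not} sufficient to produce your claimed bound $|\widehat{e^s_\lambda}(\eta)|\le C_M(1+\lambda)^{-s}(1+||\eta|-\lambda|)^{s-M}$ for arbitrary $M$. Subtracting only the constant term of the Riesz factor gives, for $\eta$ inside the ball at distance $d=\lambda-|\eta|$ with $1\ll d\ll\lambda$, only $|\widehat{e^s_\lambda}(\eta)|\lesssim\lambda^{-s}d^{\,s-1}$, i.e.\ a single order of improvement; feeding that into the $t$-integral yields $\int_0^\infty|\widehat{e^s_t}(\eta)|^2dt\lesssim(1+|\eta|)^{-\min(2s,1)}$, which beats the weight $(1+|\eta|^2)^{-l}$ only when $s<1/2$. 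What rescues the argument is that the cutoff of Lemma \ref{ft} vanishes \emph{identically} on a neighborhood of the origin, so that \emph{all} moments vanish: $\int\xi^\alpha\widehat\chi(\xi)d\xi=c\,D^\alpha\chi(0)=0$ for every $\alpha$. One may therefore subtract the full Taylor polynomial of $(1-|\xi|^2/\lambda^2)^s$ at $\xi=\eta$ up to order $M$ (valid on $|\xi-\eta|\le d/2$, where the $(M{+}1)$-st derivative of the Riesz factor is of size $(d/\lambda)^s d^{-(M+1)}$), handle the region $|\xi-\eta|>d/2$ by combining the rapid decay of $\widehat\chi$ with the size bound $(1-|\xi|^2/\lambda^2)^s\lesssim(|\eta-\xi|+d)^s\lambda^{-s}$, and so obtain decay of arbitrary order; the regimes $d\lesssim1$ and $|\eta|>\lambda$ need no cancellation at all. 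You should state this stronger moment cancellation explicitly, since with only $\chi(0)=0$ the estimate you aim for is false in the range $s\ge 1/2$ that the theorem requires. A second, minor point: the ``a fortiori for $s\ge0$'' claim for the mollified pieces does need the classical Abelian property of Riesz means (higher order summability is weaker); it is cleaner to observe that $f_k=f*\rho_{1/k}\in C_0^\infty$, hence $\widehat{f_k}\in L_1$, and $E^s_\lambda f_k\to f_k=0$ on the off-support ball by dominated convergence. With these two repairs your outline closes.
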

\begin{thm}\label{N} For any $s$ and $l$ such that $0\leq s<l$ there are $f \in
H^{-l}\cap \mathcal{E}'$ and a set $K$ of positive measure such
that $supp \ f\cap K = \emptyset$ and
$$
\limsup_{\lambda\to \infty} |E^s_\lambda f(x)| = \infty, \ a.e.
x\in K.
$$
\end{thm}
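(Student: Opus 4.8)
The plan is to establish the sharpness of Theorem \ref{D} by an explicit construction: a lacunary superposition of modulated smooth bumps, all supported away from the set on which divergence is produced, so that the threshold sits exactly at $s=l$. Fix $0\le s<l$. Choose the geometry so that $B=\{|y|<\tfrac12\}$ and the annulus $A=\{1<|x|<2\}$ are disjoint, fix a direction $\omega\in S^{N-1}$ and a real $\phi\in C_0^\infty(B)$, and put $\lambda_k=2^k$. For coefficients $c_k$ to be chosen, define
\[
f(y)=\phi(y)\sum_{k=1}^\infty c_k\,e^{i\lambda_k\,\omega\cdot y},\qquad \hat f(\xi)=\sum_{k=1}^\infty c_k\,\hat\phi(\xi-\lambda_k\omega).
\]
Since every summand carries the same spatial cutoff $\phi$, the distribution $f$ is supported in $\overline{B}$, so $f\in\mathcal{E}'$ and $\operatorname{supp}f\cap A=\emptyset$. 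Because $\hat\phi$ is Schwartz and the centres $\lambda_k\omega$ are lacunary, the translates $\hat\phi(\cdot-\lambda_k\omega)$ are almost orthogonal in the weighted $L_2$ defining $H^{-l}$, whence $\|f\|_{H^{-l}}^2\lesssim\sum_k|c_k|^2\lambda_k^{-2l}$. I would then take $c_k=k\,\lambda_k^{s}$, for which $\sum_k|c_k|^2\lambda_k^{-2l}=\sum_k k^2 2^{2k(s-l)}<\infty$ — here the condition $s<l$ is exactly what is needed — so that $f\in H^{-l}\cap\mathcal{E}'$.

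The heart of the matter is to show that, evaluated at $\lambda=\lambda_m$, the series essentially collapses to its $m$-th term on $A$. First I would record the on-scale asymptotics: substituting $\xi=\lambda_m\omega+\zeta$ and using $(1-|\xi|^2/\lambda_m^2)_+^s\approx(2|\omega\cdot\zeta|/\lambda_m)^s$ on the relevant half-space, one obtains, for $x\in A$,
\[
E^s_{\lambda_m}\!\big(\phi\,e^{i\lambda_m\omega\cdot y}\big)(x)=\lambda_m^{-s}\,e^{i\lambda_m\omega\cdot x}\,\Psi(x)+o(\lambda_m^{-s}),\qquad \Psi(x)=c\!\int_{\omega\cdot\zeta<0}|\omega\cdot\zeta|^{s}\,\hat\phi(\zeta)\,e^{ix\cdot\zeta}\,d\zeta,
\]
with remainder $O(\lambda_m^{-s-1})$ uniformly for $x$ in the compact set $A$. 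Thus the $m$-th term contributes $c_m\lambda_m^{-s}e^{i\lambda_m\omega\cdot x}\Psi(x)=m\,e^{i\lambda_m\omega\cdot x}\Psi(x)$, of modulus $m|\Psi(x)|$. For the remaining terms I would exploit that $x$ lies at distance $\ge\tfrac12$ from $\operatorname{supp}\phi$: for $k<m$ the function $\phi\,e^{i\lambda_k\omega\cdot y}$ is smooth and vanishes near $x$, so by non-stationary phase applied to the Bessel (Riesz) kernel its Riesz mean is $O(\lambda_k^{M'}\lambda_m^{-M})$ for every $M$; for $k>m$ the frequency $\lambda_k$ lies well outside $\{|\xi|<\lambda_m\}$ and only the rapidly decaying Schwartz tail of $\hat\phi$ enters, giving $O((\lambda_k-\lambda_m)^{-M})$. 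Summing these against $c_k=k\lambda_k^{s}$ and using lacunarity, both off-scale sums tend to $0$. Hence $|E^s_{\lambda_m}f(x)|=m\,|\Psi(x)|+o(1)$ on $A$.

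It then remains to take $K$ to be a positive-measure subset of $A$ on which $\Psi\neq 0$; since $\Psi$ is the Fourier transform of a nonzero $L_1$ function it does not vanish identically, and for a suitable choice of $\phi$ it is nonzero a.e. on $A$, so $K$ may be taken of full measure in $A$. On such $K$ one gets $|E^s_{\lambda_m}f(x)|\to\infty$, whence $\limsup_{\lambda\to\infty}|E^s_\lambda f(x)|=\infty$ for a.e. $x\in K$, which is the assertion. I expect the main obstacle to be the quantitative decoupling of the scales — proving the off-scale estimates uniformly in $x\in A$ and summably against the growing coefficients $c_k$, together with the genuine $o(\lambda_m^{-s})$ control of the on-scale remainder — since this is precisely where the rapid, non-stationary-phase localization of the Riesz kernel away from the support must be used carefully and is what pins the critical index at $s=l$; verifying $\Psi\not\equiv 0$ is a minor but necessary point.
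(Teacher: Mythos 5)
Your construction is correct in its essentials, and it takes a genuinely different route from the paper's. The paper (following \cite{AB2}) does not argue directly: it deduces Theorem \ref{N} from Theorem \ref{nr} (failure of generalized localization for negative-order Riesz means $E^{-\epsilon}_\lambda$ of compactly supported $L_2$ functions), which in turn is obtained by reducing to the failure of generalized localization of $E_\lambda$ in $L_p$, $p<2$, i.e.\ to Bastys's Kakeya/Fefferman-type counterexample \cite{BAS}. You instead build the counterexample by hand: a lacunary sum of modulated bumps $f=\phi\sum_k c_k e^{i\lambda_k\omega\cdot y}$ with $c_k=k\lambda_k^{s}$, where the $H^{-l}$ norm is controlled by almost orthogonality (finite precisely because $s<l$), while at $\lambda=\lambda_m$ the series collapses to its $m$-th term of size $m|\Psi(x)|\to\infty$. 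This is self-contained, avoids Kakeya machinery entirely, and makes the sharpness of Theorem \ref{D} transparent (your $f$ lies in $H^{-l'}$ exactly when $l'>s$, yet diverges for Riesz order $s$). What the paper's route buys is economy: it recycles a known deep counterexample instead of redoing an oscillatory-integral analysis.

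Two points in your sketch need repair, though neither is fatal. (i) Your claim that $\Psi$ can be made nonzero a.e.\ on $A$ is false: the multiplier $|\omega\cdot\zeta|^{s}\chi_{\{\omega\cdot\zeta<0\}}$ depends only on $\omega\cdot\zeta$, so $\Psi(x)=c\int\kappa(t)\,\phi(x-t\omega)\,dt$ for a one-dimensional kernel $\kappa$, homogeneous of degree $-1-s$; hence $\Psi$ vanishes identically outside the cylinder $\mathrm{supp}\,\phi+\mathbb{R}\omega$. This is harmless: taking $\phi\geq0$, on the forward part of that cylinder $\kappa(t)$ has a fixed phase and $\Psi\neq0$ there, and the intersection of this region with $A$ is a set $K$ of positive measure, which is all the theorem requires. (ii) The off-scale bound for $k<m$ cannot be obtained from the leading Bessel asymptotics plus its $O((\lambda_m|z|)^{-3/2})$ error: that non-oscillatory error, multiplied by $c_k$ and summed over $k<m$, is of order $m\,\lambda_m^{N/2-3/2}$, which is not $o(m)$ for $N\geq 3$ and is exponentially large in $m$ for $N\geq4$, swamping the main term $m|\Psi(x)|$. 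You must either use the full symbol expansion of the Bessel kernel before applying non-stationary phase, or (cleaner) argue on the frequency side: near $\lambda_k\omega$ the multiplier $(1-|\xi|^2/\lambda_m^2)_+^{s}$ is smooth with derivatives $O(\lambda_m^{-|\alpha|})$, so after a smooth cutoff its kernel is $O_M(\lambda_m^{N-M})$ at distance $\geq\frac12$ from $\mathrm{supp}\,\phi$, uniformly in $k<m$; this is summable against the $c_k$. With these repairs your proof goes through.
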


In Theorem \ref{D} it is actually proved that the action on a distribution of the Riesz means of order $s$
makes it roughly $s$ - smoother.

Observe, as mentioned above for $E_\lambda^0 f(x)\equiv E_\lambda f(x)$ the generalized localization holds true for
$L_2$-functions. The following theorem (proved in \cite{AB2}) states, that for a negative order Riesz means
the generalized localization fails.

\begin{thm}
\label{nr} For any $\epsilon>0$
there are compactly supported $g \in L_2(\mathbb{R}^N)$ and set $K:
mes(K)>0$ and $supp \ g \cap K = \emptyset $ such that
$$
\sup_{\lambda>0} |E^{-\epsilon}_\lambda g(x)| = \infty, x \in K.
$$
\end{thm}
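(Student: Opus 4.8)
The plan is to isolate, on the Fourier side, the precise mechanism by which the negative order destroys the maximal estimate that drove the positive results, and then to promote this quantitative failure into a pointwise blow-up by an explicit superposition. Throughout I treat the principal range $0<\epsilon<1$, for which
$$
E^{-\epsilon}_\lambda g(x)=\int_{\mathbb{R}^N}\Phi^{-\epsilon}_\lambda(x-y)\,g(y)\,dy,\qquad \Phi^{-\epsilon}_\lambda(z)=c_{N,\epsilon}\,\lambda^N\,\frac{J_{N/2-\epsilon}(\lambda|z|)}{(\lambda|z|)^{N/2-\epsilon}},
$$
is an absolutely convergent integral for Schwartz $g$; larger orders follow a fortiori, since enlarging $\epsilon$ only strengthens the divergence exhibited below. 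The key contrast with the Carbery--Soria argument reproduced above lies in Corollary \ref{INT}: there the uniform bound $\int_0^\infty|\widehat{e}_t(\eta)|^2\,dt\leq C$ forced the maximal inequality (\ref{MO}). First I would compute the analogue $\widehat{(e^{-\epsilon})}_\lambda(\eta)$ and show that, because the weight $(1-|\xi|^2/\lambda^2)^{-\epsilon}$ is singular at the moving boundary $|\xi|=\lambda$, its transform is no longer rapidly decaying but only $O(\,||\eta|-\lambda|^{\,\epsilon-1}\,)$ near that boundary. Consequently $\int_0^\infty|\widehat{(e^{-\epsilon})}_t(\eta)|^2\,dt$ grows like $|\eta|^{2\epsilon}$ rather than staying bounded, so the local $L_2$ maximal bound analogous to (\ref{MO}) fails; this is the engine of the counterexample.

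Next I would convert this breakdown into size estimates in physical space through the Bessel asymptotics $J_\nu(t)\sim\sqrt{2/(\pi t)}\cos(t-\nu\pi/2-\pi/4)$, which give, for $\lambda|z|$ large,
$$
\Phi^{-\epsilon}_\lambda(z)\approx c\,\lambda^{(N-1)/2+\epsilon}\,|z|^{-(N+1)/2+\epsilon}\cos(\lambda|z|-\theta_0).
$$
The exponent $(N-1)/2+\epsilon$ is the decisive gain of $\lambda^{\epsilon}$ over the critical rate $(N-1)/2$ that the $\epsilon=0$ theory just barely controls. Testing against a thin spherical shell is the natural probe: if $g_\delta$ is supported in an annulus $\{\rho\leq|y|\leq\rho+\delta\}$ lying outside a small ball $K=\{|x|\leq r\}$ (so $r<\rho$) and normalised by $\|g_\delta\|_{L_2}\approx 1$, a stationary phase evaluation of the angular integral (with non-degenerate stationary points $\omega=\pm x/|x|$ on $S^{N-1}$) produces the surface-measure decay $\lambda^{-(N-1)/2}$, so that for $1\ll\lambda$ up to the resonant scale $\lambda\approx\delta^{-1}$ one obtains $|E^{-\epsilon}_\lambda g_\delta(x)|\geq c\,\lambda^{\epsilon}$ for all $x$ in a fixed positive-measure subset of $K$. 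Hence $\sup_{\lambda>0}|E^{-\epsilon}_\lambda g_\delta(x)|\geq c\,\delta^{-\epsilon}$, already showing that no fixed constant can serve in a maximal inequality.

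A single shell yields only a large but finite supremum, because once $\lambda\delta\gg1$ integration across the width $\delta$ restores cancellation; to force the supremum to be infinite I would superpose shells at distinct radii. Take disjoint annuli of widths $\delta_k\downarrow0$ with lacunary resonant scales $\lambda_k\approx\delta_k^{-1}$, and form $g=\sum_k a_k g_{\delta_k}$ with coefficients obeying the two independent constraints $\sum_k a_k^2<\infty$ (so that $g\in L_2$, compactly supported in a bounded annulus disjoint from $K$) and $a_k\,\delta_k^{-\epsilon}\to\infty$ (for instance $a_k=1/k$ and $\delta_k^{-\epsilon}=k^2$). Evaluating at $\lambda=\lambda_k$, the term $a_k g_{\delta_k}$ contributes $\approx a_k\lambda_k^{\epsilon}\to\infty$, whence $\sup_{\lambda>0}|E^{-\epsilon}_\lambda g(x)|=\infty$ for every $x$ in the retained positive-measure subset $K$, which is the assertion.

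The hard part is the cross-term control: at the fine scale $\lambda_k$ one must show that $E^{-\epsilon}_{\lambda_k}\big[\sum_{j\neq k}a_jg_{\delta_j}\big](x)$ is negligible against the main term $a_k\lambda_k^{\epsilon}$, uniformly for $x\in K$. For the coarser shells $j<k$ the high frequency $\lambda_k$ beats against a wide shell ($\lambda_k\delta_j\gg1$) and the width integration supplies extra decay, while for the finer shells $j>k$ one is below their resonant scale and the oscillatory integral is again smaller. Lacunarity makes both tails geometric, but organising these estimates so that the dominant contribution genuinely survives at every point of $K$ -- rather than merely establishing that the maximal operator is unbounded in norm -- is the delicate step, and it is exactly here that the precise phase $\lambda|x-y|$ and the stationary-phase decay must be exploited quantitatively.
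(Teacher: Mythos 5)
Your construction has a genuine gap, and it sits exactly where the whole argument is decided: the single-shell lower bound. With $\|g_\delta\|_{L_2}\approx 1$ the shell has amplitude $\approx\delta^{-1/2}$, and in the non-cancelling regime $\lambda\leq\delta^{-1}$ the radial integration across the width of the shell contributes a factor $\delta$ (that absence of oscillation is precisely what "resonance" means). Combined with the kernel amplitude $\lambda^{(N-1)/2+\epsilon}$ and the angular stationary-phase decay $\lambda^{-(N-1)/2}$, this gives
$$
|E^{-\epsilon}_\lambda g_\delta(x)|\ \approx\ \delta^{-1/2}\cdot\delta\cdot\lambda^{\epsilon}\ =\ \delta^{1/2}\lambda^{\epsilon},
$$
not $c\,\lambda^{\epsilon}$: you have dropped the factor $\delta^{1/2}$ forced by the $L_2$-normalization of a set of measure $\approx\delta$. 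Hence a single bump shell yields at best $\sup_{\lambda}|E^{-\epsilon}_\lambda g_\delta(x)|\approx\delta^{1/2-\epsilon}$, which tends to \emph{zero} when $\epsilon<1/2$; and with your choices $a_k=1/k$, $\delta_k^{-\epsilon}=k^2$ the $k$-th main term is $a_k\delta_k^{1/2-\epsilon}=k^{1-1/\epsilon}\to 0$ for every $\epsilon<1$. So the superposition proves nothing in the principal range of small $\epsilon$, which is the only nontrivial part of the theorem. The obstruction is structural, not a matter of constants: an unmodulated shell spreads its Fourier transform as $|\hat{g}_\delta(\xi)|\approx\delta^{1/2}|\xi|^{-(N-1)/2}$ over the whole ball $\{|\xi|\leq\delta^{-1}\}$, while the singular weight $(1-|\xi|^2/\lambda^2)^{-\epsilon}$ amplifies only an $O(1)$-neighborhood of the sphere $\{|\xi|=\lambda\}$, which carries only an $O(\delta^{1/2})$ share of the mass. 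Since the thicknesses $\delta_k$ are bounded (the shells live in a bounded region) and $a_k\to 0$, no choice of parameters with $\sum a_k^2<\infty$ can make $a_k\delta_k^{1/2-\epsilon}$ unbounded once $\epsilon\leq 1/2$; unmodulated radial shells simply cannot prove the statement in that range.

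The architecture can be rescued, but only by decoupling the frequency parameter from the shell thickness: take radially modulated shells $g_k(y)=\delta^{-1/2}\phi\big((|y|-\rho_k)/\delta\big)e^{i\mu_k|y|}$ with $\phi$ smooth, $\delta$ fixed, and $\mu_k\to\infty$ rapidly. Then $\hat{g}_k$ concentrates near the sphere $\{|\xi|=\mu_k\}$, and at $\lambda\approx\mu_k$ the endpoint computation produces a main term $\approx\delta^{1/2}\mu_k^{\epsilon}$, which is unbounded for \emph{every} $\epsilon\in(0,1)$ because $\mu_k$ is now a free parameter; moreover the rapid decay of $\hat{\phi}$ away from $\{|\xi|=\mu_j\}$ makes the cross terms at scale $\mu_k$ uniformly bounded (low-frequency shells) or negligible (high-frequency shells), which also repairs the step you yourself flagged as delicate. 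You should also know that this is in any case a different route from the paper's: Theorem \ref{nr} is proved there not by any radial construction, but by reducing the statement to the failure of generalized localization for the zero-order means $E_\lambda$ on $L_p$, $p<2$, i.e. to Bastys' Fefferman--Kakeya type counterexample \cite{BAS}. A corrected modulated-shell argument would be more elementary and self-contained than that reduction; the version you have written, however, is not correct as it stands.
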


The arguments of the proof  here essentially consist in reducing the problem
to the generalized localization of spherical means $E_\lambda g(x)$ on $L_p$ with $p<2$, for which there is a counterexample due to Bastys \cite{BAS}.

We note, that the proof of Theorem \ref{N} is essentially based on
Theorem \ref{nr}.

\textbf{3. Generalized localization for continuous wavelet
decompositions.}

In the paper \cite{AF}, the authors  studied the generalized
localization principle for multidimensional spherically symmetric
continuous wavelet decompositions. It should be noted that the
pointwise convergence of one-dimensional and multidimensional
continuous wavelet decompositions has been investigated by many
authors (a detailed survey of their work can be found in the paper
\cite{AB3} by Ashurov and Butaev). In this case, to ensure the
almost-everywhere convergence of multidimensional (and even
one-dimensional) spherically symmetric continuous wavelet
decompositions, it is necessary to impose an additional
restriction on the rate of decay at infinity of the corresponding
wavelets. In the above paper \cite{AF}, using the result of
Carbery-Soria \cite{CAR}, it was shown that the generalized
localization principle holds for multidimensional spherically
symmetric continuous wavelet decompositions  (with completely
arbitrary wavelets) of functions $f\in L_p(\mathbb{R}^N)$,
provided $2\leq p<2N/(N-1)$.

\section{Generalized localization principle for the multiple Fourier series}

\textbf{1. The main result.} If we turn back to the multiple Fourier series (\ref {SL}) and
consider the classes $L_p(\mathbb{T}^N)$ when $1\leq p<2$, then as A.
Bastys \cite{BAS}  has proved, following Fefferman in making use
of the Kakeya's problem, that the generalized localization for
$S_\lambda$ is not valid, i.e. there exists a function $f\in
L_p(\mathbb{T}^N)$, such that on some set of positive measure, contained in
$\mathbb{T}^N\backslash supp f,$ we have
$$
\overline{\lim\limits_{\lambda\to\infty}}|S_\lambda f(x)|=+\infty.
$$
It should be noted that in \cite{BAS} this
result is also proved for the spherical partial integrals
$E_\lambda f(x)$.

The following result gives a complete solution of the generalized localization
problem for the spherical partial sums of the multiple Fourier series. It may be worth mentioning
that this problem was first formulated in 1976
in a review paper \cite{AIN}.

\begin{thm}\label{MAIN}
Let $f\in L_2(\mathbb{T}^N)$ and $f=0$ on an open set $\Omega\subset \mathbb{T}^N$.
Then the equality $\lim\limits_{\lambda\to\infty}S_\lambda f(x)=0$ holds a.e. on $\Omega$.
\end{thm}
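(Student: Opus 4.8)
The plan is to reduce the statement to a local maximal inequality of exactly the shape of (\ref{MO}) and then to run the Carbery--Soria scheme of Section 3 with sums over the lattice in place of integrals. Since the assertion is local, I first translate an arbitrary point $x_0\in\Omega$ to the origin and choose $R>0$ with $\{|x|<R\}\subset\Omega$; as in the integral case one may then assume that $f$ is supported in the annulus $\{R\le|x|\le A\}$, and it suffices to prove, for every $r<R$,
\[
\int_{|x|\le r}|S_\star f(x)|^2\,dx\le C\int_{\mathbb T^N}|f(x)|^2\,dx,\qquad S_\star f=\sup_{\lambda>0}|S_\lambda f|.
\]
Granting this inequality, convergence to $0$ a.e. on $\Omega$ follows by the standard density argument: for $g\in C^\infty(\mathbb T^N)$ vanishing in a neighbourhood of the ball the spherical sums $S_\lambda g$ converge uniformly to $g\equiv 0$ there, such $g$ approximate $f$ in $L_2$, and the maximal inequality controls $S_\star(f-g)$.

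Next I set up the localized kernel. With the same cutoff $\chi$ as above, put $\vartheta_\rho(x)=\chi(x)\,\theta(x,\rho^2)$, so that $\widehat\vartheta_\rho(\eta)=(2\pi)^{-N}\sum_{|n|<\rho}\widehat\chi(\eta-n)$ and, for $|x|\le r$, $S_{\rho^2}f(x)=(\vartheta_\rho\ast f)(x)$, exactly because $f$ lives in the annulus and $\chi(x-y)=1$ on the resulting range of $x-y$. The first technical step is the series analogue of Lemma \ref{ft}, namely $|\widehat\vartheta_\rho(\eta)|\le C_l(1+||\eta|-\rho|)^{-l}$; this is proved verbatim as Lemma \ref{ft}, the ball integral being replaced by the lattice sum and the decay $|\widehat\chi(\xi)|\le C_j(1+|\xi|)^{-j}$ being summed instead of integrated over $|n-\eta|$ large. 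From it the analogue of Corollary \ref{INT}, $\int_0^\infty|\widehat\vartheta_\rho(\eta)|^2\,d\rho\le C$ uniformly in $\eta$, is immediate, and Plancherel gives the mean-square bound
\[
\int_{\mathbb R^N}\int_0^\infty|\vartheta_\rho\ast f(x)|^2\,d\rho\,dx\le C\int_{\mathbb R^N}|f(x)|^2\,dx.
\]

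The decisive departure from the integral case is that $\rho\mapsto S_{\rho^2}f(x)$ is now a pure jump function, constant between consecutive values of $|n|$ and jumping by $J_\rho(x)=\sum_{|n|=\rho}f_ne^{inx}$ at each radius $\rho=|n|$. Consequently the smooth identity $[e_\rho\ast f]^2=2\int_0^\rho(e_t\ast f)\frac{d}{dt}(e_t\ast f)\,dt$ underlying (\ref{e}) is no longer available. In its place I would use the bounded-variation identity
\[
\sup_{\rho>0}|S_{\rho^2}f(x)|^2\le\sum_\rho\big(|a(\rho^+)|+|a(\rho^-)|\big)\,|J_\rho(x)|,\qquad a(\rho)=S_{\rho^2}f(x),
\]
and, after integrating in $x$ and applying Cauchy--Schwarz in $\rho$, split the right-hand side into a mean-square factor, governed by the displayed Plancherel bound, and a variation factor $\sum_\rho g_\rho^{-1}\int|J_\rho|^2$, where $g_\rho$ is the gap to the next radius. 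The jump-square sum by itself is harmless, since orthogonality on $\mathbb T^N$ gives $\int_{\mathbb T^N}\sum_\rho|J_\rho|^2=\|f\|_2^2$; the trouble is that for $N\ge 2$ the radii $|n|$ accumulate with gaps $g_\rho\sim\rho^{-1}$, so a fixed choice of weights produces $\sum_\rho\rho\,\|J_\rho\|_2^2\sim\|f\|_{H^{1/2}}^2$, a loss of half a derivative that is not allowed for $f\in L_2(\mathbb T^N)$.

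Thus \textbf{the main obstacle} is to control the supremum over the discrete spectrum $\{|n|^2\}$ without this $L_2\to H^{1/2}$ loss, and this is the only point where the arithmetic of the lattice genuinely enters. I would close the gap not by fixed weights but by grouping the radii into the unit bands $\rho\in[k,k+1)$ and estimating the contribution of each band separately, exploiting the distribution of lattice points on spheres (the count $\#\{n:k\le|n|<k+1\}\sim k^{N-1}$ and the resulting near-uniform spacing of the values $|n|$) together with the concentration of $\widehat\vartheta_\rho$ near $|\eta|=\rho$ from Lemma \ref{ft}, which renders the contributions of different bands to $\vartheta_\rho\ast f$ almost orthogonal. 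Summing the band estimates against the mean-square bound then recovers the local maximal inequality with a constant depending only on $R$ and $r$, which completes the proof. I expect this band-by-band treatment of the jumps to be the technical heart of the argument.
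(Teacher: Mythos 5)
You have reproduced the paper's route faithfully up to its decisive step: the reduction of Theorem \ref{MAIN} to a local maximal inequality (the paper's Theorem \ref{MAX1}) plus a density argument, the localized kernel with a smooth cutoff, the rapid-decay estimate for its Fourier coefficients (Lemma \ref{coef2}), the telescoping/bounded-variation identity replacing the smooth identity of the integral case, and the correct identification of the obstacle: with any fixed choice of weights the jump terms cost a factor of $|n|$, which is exactly the paper's estimate (\ref{W}) and the $L_2\to H^{1/2}$ loss you describe. But at that point your proposal stops being a proof, and the mechanisms you invoke cannot close the gap. The concentration of $\widehat{\vartheta}_\rho$ near $|\eta|=\rho$ and the resulting almost-orthogonality of unit bands is precisely the content of Lemma \ref{coef2} and Lemma \ref{bigl}; it reduces matters to the single band $k\approx|n|$, inside which there remain about $2k$ jump radii, and any Cauchy--Schwarz across them with $n$-independent weights loses the factor $k$ regardless of how those radii are spaced. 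The spacing of the values $|n|$ is simply not the relevant quantity (for $N=2$ the near-uniform spacing claim is in any case false, since the integers represented as sums of two squares have density zero, while for $N\ge 4$, where the spacing genuinely is uniform, the loss persists); what matters is the size of the individual jump coefficients $(\Theta_{k^2+p})_n$, i.e.\ weighted counts of lattice points on the spheres $\{m:|m-n|^2=k^2+p\}$ lying near the origin.

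The idea that actually carries the paper's proof, and for which your proposal offers no substitute, is an intra-band decomposition adapted to the frequency $n$ and of arithmetic--geometric nature. For fixed $n$ and $k$ one partitions the parameters $p\in\{0,1,\dots,2k\}$ into sets $Q_q^k$ according to which cylindrical slab $P_q^k$ of the spherical shell $\{k\le|x-n|<k+1\}$ contains the lattice solutions of $|m-n|^2=k^2+p$. Two quantitative facts then do all the work: each $Q_q^k$ contains at most $2^N[\sqrt{q+1}\,]$ parameters, and for $p\in Q_q^k$ every solution $m$ satisfies $|m|\ge c\sqrt{(|n|-k)^2+q}$, which through the rapid decay of the cutoff's Fourier coefficients produces the extra decay in $q$ of Lemma \ref{Q}. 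Feeding these into the cross term with the unbalanced weights $(q+1)$ and $(q+1)^{-1}$ (Corollary \ref{Sbigl} and Lemma \ref{Lsmall}) bounds the right-hand side of (\ref{theta}) by $C\sum_n|f_n|^2$ with no loss of derivatives. Your ``band-by-band treatment'' contains neither this partition, nor the cardinality bound of order $\sqrt{q+1}$, nor any mechanism that could beat the factor $k$ per band; since you yourself flag this as the technical heart of the argument, the proposal has a genuine gap exactly there.
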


Thus the problem of generalized localization for $S_\lambda$ is
completely solved in classes $L_p(\mathbb{T}^N)$, $p\geq 1$: if $p\geq2$
then we have the generalized localization and if $p<2$, then the
generalized localization fails.

Recall that in order for classical Riemann localization to take
place for $S_\lambda f$, the expandable function $f$ must have
$\frac{N-1}{2}$ "derivatives", i.e. $f$ must belong to the Sobolev
class $H^l(\mathbb{T}^N)$ with $l\geq \frac{N-1}{2}$ (see
\cite{AIN}).

Let us introduce the
maximal operator
$$ S_\star f(x)=\sup\limits_{\lambda>0}|S_\lambda
f(x)|. $$

The proof of Theorem \ref{MAIN} is based on the following estimate
of this operator.

\begin{thm}\label{MAX}
Let $\Omega$ be an open subset of $\mathbb{T}^N$. Then for any
compact set $K\subset \Omega$ there exists a constant $C_K>0$ such
that for any function $f\in L_2(\mathbb{T}^N)$ with $\text{supp}\,
f\subset \mathbb{T}^N\setminus \Omega$ one has
\begin{equation}\label{max}
\|S_\star f(x)\|_{L_2(K)}\ \leq\ C_K\|f\|_{L_2(\mathbb{T}^N)}.
\end{equation}
\end{thm}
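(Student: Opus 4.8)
The plan is to transplant the Carbery--Soria argument used above for $E_\star$ to the torus, the essential new ingredient being the replacement of the Bessel kernel $e(x,\sqrt\lambda)$ by the Dirichlet-type kernel $\theta(x,\lambda)$ and the replacement of Corollary \ref{INT} by a bound summed over lattice spheres. First I would reduce to a local model. Since $K$ is compact and $\Omega$ open, I cover $K$ by finitely many balls each compactly contained in $\Omega$; by the triangle inequality it suffices to prove (\ref{max}) with $K$ replaced by one such ball. After a translation I may assume this ball is $\{|x|\le r\}$, that $\{|x|<R\}\subset\Omega$ for some $R>r$, and that $f$ is supported in the part $\{R\le|x|\}$ of the fundamental domain $(-\pi,\pi]^N$. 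The point of the reduction is that for $x$ in the small ball and $y\in\mathrm{supp}\,f$ the torus-difference $x-y$ lies in a compact set $E\subset(-\pi,\pi]^N$ bounded away from the origin, and $E$ contains no nonzero point of $2\pi\mathbb{Z}^N$, since every nonzero $2\pi m$ has a coordinate of absolute value at least $2\pi$.

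Next I would insert a cutoff exactly as in the integral case. Choose a radial $\chi\in C_0^\infty(\mathbb{R}^N)$ that vanishes near the origin, equals $1$ on $E$, and whose support avoids every nonzero lattice point $2\pi m$ (possible by the observation just made). Writing $\theta_\lambda(x)=\chi(x)\theta(x,\lambda)$, one has $S_\lambda f(x)=(\theta_\lambda\ast f)(x)$ for every $x$ in the ball, so that, just as (\ref{MO}) was reduced to (\ref{e}), Theorem \ref{MAX} reduces to
\[
\int_{\mathbb{R}^N}\sup_{\lambda>0}|\theta_\lambda\ast f|^2\,dx\le C\int_{\mathbb{R}^N}|f|^2\,dx.
\]

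The heart of the matter is the Fourier-transform analogue of Lemma \ref{ft}. A direct computation gives $\widehat{\theta}_\lambda(\eta)=(2\pi)^{-N}\sum_{|n|^2<\lambda}\widehat\chi(\eta-n)$, the exact discrete counterpart of the integral in Lemma \ref{ft}. Because $\chi$ vanishes on $2\pi\mathbb{Z}^N$, the Poisson summation formula yields $\sum_{n\in\mathbb{Z}^N}\widehat\chi(\eta-n)=0$, so I may replace the partial sum by its tail $-\sum_{|n|^2\ge\lambda}\widehat\chi(\eta-n)$. The rapid decay of $\widehat\chi$ (Schwartz, since $\chi\in C_0^\infty$), together with the fact that the nearest lattice point of the tail lies at radial distance of order $||\eta|-\sqrt\lambda|$, then gives $|\widehat{\theta}_\lambda(\eta)|\le C_l(1+||\eta|-\sqrt\lambda|)^{-l}$ for every $l$, uniformly in $\lambda$ and $\eta$. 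This is the precise torus substitute for Lemma \ref{ft}, with the radius $\sqrt\lambda$ now playing the role of $\lambda$.

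Finally I must run the square-function argument, and here the discreteness forces genuine work and is where I expect the main obstacle. The map $\lambda\mapsto\theta_\lambda\ast f$ is piecewise constant, jumping only at $\lambda=|n|^2$, so the identity $[e_\lambda\ast f]^2=2\int_0^\lambda(e_t\ast f)\partial_t(e_t\ast f)\,dt$ used above is unavailable; its $\lambda$-derivative is a sum of point masses carried by the spherical shells $\{|n|=\nu_k\}$, where $\nu_0<\nu_1<\cdots$ are the distinct lattice radii. I would replace it by Abel summation over the $\nu_k$, bounding $\sup_k|S_{\nu_k^2}f|^2$ by the first term $|S_{\nu_0^2}f|^2$, the shell increments $\Delta_k=\chi\cdot\sum_{|n|=\nu_{k+1}}f_ne^{in\cdot}$, and the cross term $\sum_k|S_{\nu_k^2}f|\,|\Delta_k|$ (the parallel of the two-integral bound of the continuous case). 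After Plancherel the first two groups are controlled using $\sum_n|\widehat\chi(\eta-n)|^2\le C$, while the cross term, via a weighted Cauchy--Schwarz in $k$ and in $x$, reduces to uniform-in-$\eta$ bounds for $\sum_k w_k(1+||\eta|-\nu_k|)^{-2l}$ and for $\sum_k w_k^{-1}\big|\sum_{|n|=\nu_{k+1}}\widehat\chi(\eta-n)\big|^2$. Unlike the integral $\int_0^\infty(1+||\eta|-s|)^{-2l}\,ds$ of Corollary \ref{INT}, the first sum over distinct lattice radii is \emph{not} bounded by an absolute constant: a unit radial annulus may contain as many as $C(1+|\eta|)^{N-1}$ distinct values $\nu_k$. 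The crux is therefore to choose the weights $w_k$ so as to balance this growth against the sparseness of lattice points on an individual sphere, which requires quantitative lattice-point counting in thin spherical annuli. Establishing this uniform bound is the step that goes beyond the continuous argument and that I expect to be the principal difficulty.
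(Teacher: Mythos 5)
Your reduction, your cutoff construction, and your kernel estimate all match the paper's proof (Theorem \ref{MAX1} and Lemma \ref{coef2}): the paper likewise localizes to a ball, multiplies the kernel $\theta(x,\lambda)$ by a smooth function vanishing near the origin (a \emph{periodized} cutoff $\psi$ rather than your compactly supported $\chi$; note that for $N\geq 4$ your requirements on $\chi$ are incompatible with radiality, since a radial function equal to $1$ on the set of actual differences $x-y$ must equal $1$ somewhere on the sphere of radius $2\pi$ --- drop radiality or periodize), and proves $|(\theta_j)_n|\leq C_l(1+||n|-\sqrt{j}|)^{-l}$ by the same cancellation $\sum_m\psi_m=\psi(0)=0$ that you implement via Poisson summation. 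You also put your finger on the right obstruction: the discrete analogue of Corollary \ref{INT} fails, which is exactly the paper's estimate (\ref{W}), $\sum_j|(\theta_j)_n|^2\leq C|n|$. (Your count of $C(1+|\eta|)^{N-1}$ distinct lattice radii in a unit annulus is an overcount for $N\geq 3$; the number of distinct radii is at most the number of integers in $[k^2,(k+1)^2)$, i.e. about $2k$, but unboundedness is all that matters.)

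The gap is precisely the step you defer: you never prove the uniform bound, and the scheme you propose for it --- weights $w_k$ attached to the lattice radii $\nu_k$, independent of the frequency --- cannot work, for any choice of $w_k$. After Plancherel you would need both $\sum_k w_k|(\Theta_{\nu_k^2})_n|^2\leq C$ and $\sum_k w_k^{-1}|(\theta_{\nu_k^2})_n|^2\leq C$ uniformly in $n$. Take $N\geq 4$, so by Lagrange's theorem every integer $j$ is a squared lattice radius. For typical $n$ the coefficient $(\Theta_{|n|^2})_n$ is dominated by the single term $\psi_0\approx 1$ (the solution $m=0$ of $|n-m|^2=|n|^2$), so the first bound forces $w_k\lesssim 1$ at $\nu_k=|n|$, hence for essentially all $k$; on the other hand $|(\theta_j)_n|\gtrsim 1$ for on the order of $|n|$ values of $j$ with $|\sqrt{j}-|n||\leq 1$, so with $w_k\lesssim 1$ the second sum is $\gtrsim |n|$. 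The profiles of $|(\Theta_j)_n|$ and $|(\theta_j)_n|$ in $j$ translate with $n$, and no fixed radial weight sequence can be simultaneously adapted to all of them.

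What the paper does instead --- and what is missing from your outline --- is a regrouping that is allowed to depend on the frequency $n$. For fixed $n$ and $k$ it cuts the annulus $\{m:k\leq|m-n|<k+1\}$ into pieces $P_q^k$ by coaxial cylinders about the line $On$ of radii $\sqrt{q},\sqrt{q+1}$, and collects into $Q_q^k$ those shell parameters $p\in\{0,\dots,2k\}$ for which $|m-n|^2=k^2+p$ has a solution in $P_q^k$. The weights $(q+1)^{\pm 1}$ are then attached to the \emph{transversal} parameter $q$, not to the radius $\sqrt{k^2+p}$: the same shell may lie in a low-$q$ group for one $n$ and a high-$q$ group for another. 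Two geometric facts close the argument: every solution $m\in P_q^k$ satisfies $|m|\geq c\sqrt{(|n|-k)^2+q}$, so the rapid decay of $\psi_m$ defeats the weight $(q+1)^2$ (Lemma \ref{Q}, Corollary \ref{Sbigl}); and each $Q_q^k$ contains at most $2^N[\sqrt{q+1}\,]$ values of $p$, so the dual sum with weight $(q+1)^{-2}$ converges (Lemma \ref{Lsmall}). This frequency-adapted, lattice-geometric decomposition is the heart of the paper's proof; without it, or a genuine substitute for it, your argument cannot be completed.
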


The formulated  theorems are easily transferred to the case of non-spherical partial sums of multiple Fourier series (see \cite{AAA}, \cite{AB}).

We should also note, that in the remarkable paper of C. Kenig and
P. Tomas \cite{KP} the authors  proved, by making use of
transference techniques, the equivalence of convergence a.e. of
spherical partial sums of multiple Fourier series and integrals.
It may be worth mentioning that Theorem \ref{MAIN} on the
generalized localization does not follow from the Carbery - Soria
theorem  on Fourier integrals  \cite{CAR}, by application of the
C. Kenig and P. Tomas theorem \cite{KP}. The reason here is that,
we integrate  on the left hand side of (\ref{max}) only over a
compact $K$, where the function vanishes (not over whole domain
$\mathbb{T}^N$), and therefore we can not apply the duality
technique, which is the key step in the proof of C. Kenig and P.
Tomas. Nevertheless, to prove Theorem \ref{MAIN} we have used many
of original ideas from A. Carbery and F. Soria \cite{CAR}.

\textbf{2. Auxiliary assertions.} It is not hard to verify, that Theorem \ref{MAX} is an easy
corollary of the following theorem:
\begin{thm}\label{MAX1}
Let $f\in L_2(\mathbb{T}^N)$ and $f=0$ on the ball $\{|x|<R\}$, $R<1$.
Then for any $r<R$ there exists a constant $C=C(R,r)$, such that
\begin{equation}\label{MES}
\int\limits_{|x|\leq r}|S_\star f(x)|^2 dx\leq C
\int\limits_{\mathbb{T}^N}|f(x)|^2 dx.
\end{equation}
\end{thm}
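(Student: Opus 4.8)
The plan is to transplant the Carbery--Soria argument that yields (\ref{MO}) to the periodic setting, with the periodic Dirichlet kernel $\theta$ playing the role of the Bessel kernel $e(x,\lambda)$. First I would reduce, exactly as in the integral case, to an estimate for an ordinary convolution operator on $\mathbb{R}^N$. Since $f$ vanishes on $\{|x|<R\}$ and we only test on $\{|x|\le r\}$ with $r<R<1$, the representative of $x-y$ in the fundamental domain stays in the annulus $\{|z|\ge R-r\}$ whenever $f(y)\ne0$: no wrapping occurs, precisely because $|x|\le r<1<\pi$ keeps $x$ away from the boundary of $\mathbb{T}^N$. Hence I introduce a radial cutoff $\chi\in C_0^\infty$ with $\chi(0)=0$ and $\chi\equiv1$ on the relevant annulus, set $\theta_\mu(z)=\theta(z,\mu^2)\chi(z)$, and observe that $S_{\mu^2}f(x)=(\theta_\mu\ast f_0)(x)$ for $|x|\le r$, where $f_0=f\cdot\mathbf 1_{\mathbb{T}^N}$ is regarded as a function on $\mathbb{R}^N$ and $\ast$ is ordinary convolution. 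I reparametrise by the radius $\mu=\sqrt\lambda$, so that $S_\star f=\sup_{\mu>0}|\theta_\mu\ast f_0|$ on $\{|x|\le r\}$; this is the correct normalisation, since working in $\mu$ rather than in $\lambda$ produces parameter integrals with the right homogeneity, mirroring Corollary~\ref{INT}.

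The decisive new ingredient is the periodic analogue of Lemma~\ref{ft}. Writing $\widehat{\theta_\mu}(\eta)=c\sum_{|n|<\mu}\widehat\chi(\eta-n)$, I would evaluate the full lattice sum by Poisson summation: since the double Fourier transform returns $\chi$, one obtains $\sum_{n\in\mathbb{Z}^N}\widehat\chi(\eta-n)=(2\pi)^{N/2}\sum_{m}\chi(2\pi m)e^{2\pi im\eta}=:P(\eta)$, a fixed trigonometric polynomial whose $m=0$ term vanishes because $\chi(0)=0$. Just as Carbery and Soria exploit $\int\widehat\chi=0$, this identity lets me split $\widehat{\theta_\mu}(\eta)=P(\eta)\mathbf 1_{\{|\eta|<\mu\}}+u_\mu(\eta)$, where the tail $u_\mu$ is a sum of $\widehat\chi(\eta-n)$ over lattice points on the far side of the sphere $|\xi|=\mu$. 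The nearest such point lies at distance at least $\big||\eta|-\mu\big|$, so a standard lattice-sum comparison gives the Carbery--Soria decay $|u_\mu(\eta)|\le C_l(1+||\eta|-\mu|)^{-l}$ for every $l$, and in particular $\int_0^\infty|u_\mu(\eta)|^2\,d\mu\le C$ uniformly in $\eta$, the analogue of Corollary~\ref{INT}.

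The contribution of the fixed multiplier $P$ is harmless. The function $P(\eta)\mathbf 1_{\{|\eta|<\mu\}}\widehat{f_0}(\eta)$ is the Fourier transform of $E^{\mathrm{cont}}_\mu h$, the Euclidean spherical partial integral of the fixed $h$ with $\widehat h=P\widehat{f_0}$. Because multiplication of $\widehat{f_0}$ by the trigonometric polynomial $P$ amounts in physical space to a finite combination of translates of $f_0$ by vectors of $2\pi\mathbb{Z}^N$, and $f_0$ is supported in the fundamental domain, the function $h$ vanishes in a neighbourhood of $\{|x|\le r\}$; hence the already established compact-support estimate (\ref{MO}) bounds $\sup_\mu|E^{\mathrm{cont}}_\mu h|$ on that ball by $C\|h\|_2\le C\|f\|_{L_2}$. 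It then remains to estimate the maximal operator attached to the decaying piece $u_\mu$, which I would attack along the lines of (\ref{e}), controlling $\sup_\mu|u_\mu\ast f_0|^2$ through the telescoping identity for $|\cdot|^2$ and Plancherel, with the zeroth-order term handled by $\int_0^\infty|u_\mu(\eta)|^2\,d\mu\le C$.

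The main obstacle is exactly the step where the integral proof differentiates in $\lambda$: here $\mu\mapsto u_\mu(\eta)$ is a pure jump function, with jumps at every radius $\mu=|n|$, so its derivative is a sum of point masses and the clean inequality $2ab\le a^2+b^2$ is unavailable. Worse, the spheres cluster — the number of distinct radii $|n|$ in a unit interval around $|\eta|$ grows like $|\eta|^{N-1}$ — so one cannot simply add the jumps over all spheres with uniform constants. I expect this to be overcome, as in \cite{AR}, by separating scales: the maximal operator over the sparse set of integer radii can be treated by a genuine discrete square function, for which the multiplier bounds $\sum_j|\widehat{\theta_j}(\eta)|^2\le C$ and $\sum_j|\widehat{\theta_{j+1}}(\eta)-\widehat{\theta_j}(\eta)|^2\le C$ do hold uniformly (only $O(1)$ integers lie within distance one of $|\eta|$); whereas the fluctuation of $S_{\mu^2}f$ inside each unit annulus $\{j\le|n|<j+1\}$ must be controlled by a local maximal estimate that is uniform in $j$. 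Establishing this uniform local estimate, which governs the supremum over the $\sim j^{N-1}$ spheres contained in a single annulus, is the heart of the matter, and it is precisely where the finer ideas of Carbery and Soria — in particular the derivative bound, the $j=1$ case of Lemma~\ref{ft} — become indispensable.
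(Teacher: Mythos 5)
Your reduction to a compactly supported convolution kernel, your periodic analogue of Lemma~\ref{ft}, and your Poisson--summation treatment of the periodization (the polynomial $P$ and the translated function $h$, which indeed vanishes near $\{|x|\le r\}$ by the coordinate-wise argument $|x_j-y_j-2\pi m_j|\ge 2\pi-r-\pi$) are all sound, and this part is a legitimate alternative to the paper's device: the paper sidesteps the whole issue by multiplying by a $2\pi$-periodic cutoff $\psi$ and working with Fourier coefficients, where the analogue of $\chi(0)=0$ is simply $\sum_m\psi_m=\psi(0)=0$, so no main term $P$ ever appears. (Two minor corrections: choosing a non-radial $\chi$ that vanishes near every point of $2\pi\mathbb{Z}^N$ would force $P\equiv 0$ outright; and the number of distinct radii $|n|$ in a unit interval around $|\eta|$ is $O(|\eta|)$, not $O(|\eta|^{N-1})$, because $|n|^2$ is an integer. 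Neither affects the substance.)

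The genuine gap is that your final paragraph, where the theorem actually lives, is not a proof but a description of what would need to be proved. You propose ``discrete square function over integer radii plus a uniform local maximal estimate inside each unit annulus,'' and you suggest the missing mechanism is the derivative bound, the $j=1$ case of Lemma~\ref{ft}. That suggestion cannot work: in the discrete setting the multiplier $\mu\mapsto\widehat{\theta}_\mu(\eta)$ is piecewise constant, there is nothing to differentiate, and no parameter-smoothing substitute exists; this is precisely why the Carbery--Soria argument does not transfer. The paper closes the gap by three ingredients absent from your proposal: (i) since $|n|^2\in\mathbb{Z}$, the partial sums change only at integer values of $\lambda$, so $S_\star f=\sup_{j\in\mathbb{N}}|\theta_j\ast f|$ exactly --- there is no residual ``fluctuation'' operator; instead the difficulty reappears as the failure of the square-function bound, $\sum_j|(\theta_j)_n|^2\le C|n|$ only (estimate (\ref{W})); (ii) the telescoping identity giving (\ref{W2}); and (iii), the heart of the proof, a geometric-combinatorial decomposition of each annulus $\{k\le|x-n|<k+1\}$ into cylindrical pieces $P^k_q$, inducing parameter sets $Q^k_q$ with at most $2^N[\sqrt{q+1}]$ elements, for which the weighted estimates of Lemma~\ref{Q} (weight $(q+1)^{2}$ on the $\Theta$-terms, proved via lower bounds for $|m|$ over lattice solutions of $|m-n|^2=k^2+p$ in $P^k_q$) and Lemma~\ref{Lsmall} (weight $(q+1)^{-2}$ on the $\theta$-terms, proved by counting) hold; these weights are then fed into the cross term of (\ref{W2}) via $2ab\le a^2+b^2$. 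Nothing in Carbery--Soria supplies step (iii); it is the genuinely new ingredient of \cite{AR}, and without it, or some substitute you would have to invent, your argument does not close.
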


The proof of Theorem \ref{MAX1} can be found in \cite{AR}. We
present here the main ideas of the proof.

So we assume that $f=0$ on the fixed ball $\{|x|<R\}$ and fix a number $r<R<1$.

Let $\chi_b(t)$ be the characteristic function of the segment
$[0,b]$. We denote by $\varphi_1(t)$ a smooth function with
$\chi_{(R-r)/3}(t)\leq \varphi_1(t)\leq \chi_{2(R-r)/3}(t)$ and
put $\varphi_2(t)=1-\varphi_1(t)$. Now we define a new function
$\psi(x)$ as follows: $\psi(x)=\varphi_2(|x|)$, when $x\in \mathbb{T}^N$
and otherwise it is a $2\pi$ - periodical on each variable $x_j$
function.

Let us denote $\theta_\lambda(x)=\theta(x,\lambda)\psi(x)$. Then we
have
$$
S_\lambda f(x)=\int\limits_{\mathbb{T}^N}\theta_\lambda(x-y)f(y)dy, \,\,for
\,\,all\,\, x,\,\,with \,\,|x|\leq r,
$$
since $f$ is supported in $\{|x|\geq R\}$. Therefore, if we denote
the last integral by $\theta_\lambda\ast f$, then to prove the
estimate (\ref{MES}) it suffices to obtain the inequality
\begin{equation}\label{theta}
\int\limits_{\mathbb{T}^N}\sup_{j >\,0}\left|\theta_j\ast f\right|^2dx\leq
C\int\limits_{\mathbb{T}^N}|f(x)|^2dx,
\end{equation}
where $\sup$ is taken over all positive integers.

\begin{rem}Of course, by analogy with the multiple Fourier
integrals (see (\ref{EL})), in partial sums (\ref{SL}), one could
sum over the set $\{|n|<\lambda\}$. But then we would have to take
the $sup$ in (\ref{theta}) over all positive real numbers, instead
of all positive integers.
\end{rem}

Now we need some estimates for the Fourier coefficients of the
function $\theta_j(x)$, which we denote by $(\theta_j)_n$, $j\in
\mathbb{N}$ (the set of positive integers), $n\in \mathbb{Z}^N$.

\begin{lem}\label{coef2}
For an arbitrary $l\in \mathbb{N}$ there exists a constant $C_l$,
depending on $l, \, r $ and $R$, such that for all $j\in
\mathbb{N}$ and $n\in \mathbb{Z}^N$ one has
\begin{equation}\label{Coef2}
|(\theta_j)_n|\leq\frac{C_l}{(1+||n|-\sqrt{j}|)^l}.
\end{equation}
\end{lem}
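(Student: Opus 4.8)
The plan is to mirror the Fourier-integral estimate of Lemma~\ref{ft}, replacing the Fourier transform $\widehat{\chi}$ by the Fourier coefficients of $\psi$ and the integral over the ball by a lattice sum. First I would record that, since $\theta(\cdot,j)$ has Fourier coefficients $(2\pi)^{-N}\mathbf{1}[|k|^2<j]$, the coefficients of the product $\theta_j=\theta(\cdot,j)\,\psi$ are given by the discrete convolution
$$
(\theta_j)_n=(2\pi)^{-N}\sum_{|k|^2<j}\Psi_{n-k},
$$
where $\Psi_m$ denotes the $m$-th Fourier coefficient of $\psi$. Two structural facts about $\psi$ drive the estimate. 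Because $\varphi_2$ vanishes near $0$ and equals $1$ in a neighborhood of the boundary of the fundamental cube (here $R<1<\pi$), the periodic extension of $\psi$ is genuinely $C^\infty(\mathbb{T}^N)$, so its coefficients are rapidly decreasing, $|\Psi_m|\le C_M(1+|m|)^{-M}$ for every $M$. And since $\psi(0)=\varphi_2(0)=0$, one has the vanishing-sum relation $\sum_{m\in\mathbb{Z}^N}\Psi_m=\psi(0)=0$, the exact analogue of the condition $\chi(0)=0$ used in Lemma~\ref{ft}.

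Next I would split on whether $n$ lies inside or outside the ball of radius $\sqrt{j}$. If $|n|\ge\sqrt{j}$, I keep the sum as it stands: every $k$ with $|k|<\sqrt{j}$ satisfies $|n-k|\ge|n|-\sqrt{j}=|\,|n|-\sqrt{j}\,|$, so after setting $m=n-k$,
$$
|(\theta_j)_n|\le(2\pi)^{-N}\sum_{|m|\ge|n|-\sqrt{j}}|\Psi_m|.
$$
If instead $|n|<\sqrt{j}$, I first use $\sum_m\Psi_m=0$ to rewrite the sum over the ball as minus the sum over its complement, $(\theta_j)_n=-(2\pi)^{-N}\sum_{|k|\ge\sqrt{j}}\Psi_{n-k}$; now each such $k$ gives $|n-k|\ge\sqrt{j}-|n|=|\,|n|-\sqrt{j}\,|$, yielding the same type of tail bound but with $|m|\ge\sqrt{j}-|n|$. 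In either case the problem reduces to estimating a tail of the rapidly decreasing sequence $\{\Psi_m\}$ over the region $\{|m|\ge T\}$ with $T=|\,|n|-\sqrt{j}\,|$.

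Finally, the routine lattice-point estimate $\sum_{|m|\ge T}(1+|m|)^{-M}\le C_M(1+T)^{-(M-N)}$, valid for every $T\ge 0$ once $M>N$, closes the argument: choosing $M=l+N+1$ produces $|(\theta_j)_n|\le C_l(1+|\,|n|-\sqrt{j}\,|)^{-l}$ uniformly in $j\in\mathbb{N}$ and $n\in\mathbb{Z}^N$. The one step that requires genuine care, and is the true heart of the lemma, is the inside-the-ball case: without the vanishing moment $\sum_m\Psi_m=0$ the sum over the whole ball would not decay at all, so the use of $\psi(0)=0$ is essential, exactly as in the integral version. The only other point to check is the smoothness of the periodized $\psi$, which holds precisely because $\psi\equiv 1$ near the boundary of $\mathbb{T}^N$, so that periodization introduces no jump and the rapid decay of $\{\Psi_m\}$ is legitimate.
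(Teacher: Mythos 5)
Your proof is correct and takes essentially the same route as the paper: the paper disposes of this lemma by noting the rapid decay $|\psi_m|\leq c_q(1+|m|)^{-q}$ and referring back to the proof of Lemma \ref{ft}, which is exactly the argument you spell out (discrete convolution $(\theta_j)_n=(2\pi)^{-N}\sum_{|k|^2<j}\psi_{n-k}$, the case split inside/outside the ball using $\psi(0)=0$ to flip to the complementary sum, and a lattice tail estimate). Your added verifications --- that the periodization of $\psi$ is smooth because $\psi\equiv 1$ near the boundary of the cube, and that $\sum_m\psi_m=\psi(0)=0$ --- are precisely the details the paper leaves implicit.
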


Let $\{\psi_m\}$ be the Fourier coefficients of function
$\psi(x)$: $\psi_m=(2\pi)^{-N}\int_{\mathbb{T}^N}
\psi(y)e^{-imy}dy$. Then for any integer $q\geq 0$ there exists a
constant $c_q$, depending on $(R-r)$, such that
$$
|\psi_m|\leq\frac{c_q}{(1+|m|)^q}.
$$
Lemma is an easy consequence of this estimate (see the proof of Lemma \ref{ft}).

It should be noted the importance of considering the function $\theta_j(x)$ instead of $\theta(x, j)$.
The Fourier coefficients $(\theta(x, j))_n$ of function $\theta(x, j)$ have the form
$$
(\theta(x, j))_n=
\left\{
\begin{array}{l}
1,\,\,\,\,j>|n|,\\
0,\,\,\,\,j\leq|n|.
\end{array}
\right.
$$
These numbers do not decrease as $j\rightarrow \infty$, while the numbers $(\theta_j)_n$ have a sufficiently strong decrease (see estimate (\ref{Coef2})).

Let $(\Theta_j)_n=(\theta_{j+1})_n-(\theta_{j})_n$, that is,
$$
(\Theta_{j})_n= (2\pi)^{-N}\sum\limits_{
|m|^2=j}\psi_{m-n}=(2\pi)^{-N}\sum\limits_{
|n-m|^2=j}\psi_{m}
$$
(if the Diophantine equation $|m-n|^2=j$ does not have a solution,
then $(\Theta_{j})_n=0$). Using the same arguments as above, one
can prove the following estimate.

\begin{lem}\label{bigl}

For any $k$ and $l\in\mathbb{N}$, there exists a constant $C_l$
such that
\begin{equation}\label{Big}
\sum\limits_{k\leq\sqrt{j}<k+1}|(\Theta_j)_n|^2\leq
\frac{C_l}{(1+||n|-k|)^l}.
\end{equation}
\end{lem}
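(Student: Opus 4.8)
The plan is to reduce the left-hand side of (\ref{Big}) to a single sum over the lattice annulus $A_k=\{m\in\mathbb{Z}^N:\ k\le|m|<k+1\}$ and then exploit the rapid decay of the coefficients $\psi_m$. Since $\psi$ is smooth, for every integer $q$ there is $c_q$ with $|\psi_m|\le c_q(1+|m|)^{-q}$, and in particular $|\psi_{m-n}|\le c_q(1+|m-n|)^{-q}$. Starting from $(\Theta_j)_n=(2\pi)^{-N}\sum_{|m|^2=j}\psi_{m-n}$, the triangle inequality gives $|(\Theta_j)_n|\le(2\pi)^{-N}\sum_{|m|^2=j}|\psi_{m-n}|$.

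The key combinatorial step is to \emph{avoid squaring termwise}. For nonnegative numbers $\sum_j a_j^2\le(\sum_j a_j)^2$, so
\begin{equation*}
\sum_{k\le\sqrt j<k+1}|(\Theta_j)_n|^2\le\Big(\sum_{k\le\sqrt j<k+1}|(\Theta_j)_n|\Big)^2\le(2\pi)^{-2N}\Big(\sum_{k\le\sqrt j<k+1}\ \sum_{|m|^2=j}|\psi_{m-n}|\Big)^2.
\end{equation*}
Because $|m|^2$ is an integer, the conditions $k\le\sqrt j<k+1$ and $|m|^2=j$ together describe exactly the points of $A_k$, each counted once as $j$ varies; hence the inner double sum collapses to $\sum_{m\in A_k}|\psi_{m-n}|$. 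This is the decisive point: it lets me handle the whole annulus at once and never forces me to count lattice points lying on an individual sphere.

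It then remains to prove $\sum_{m\in A_k}|\psi_{m-n}|\le C_l\,(1+||n|-k|)^{-l}$ with $C_l$ independent of $k$ and $n$. First I would establish the elementary geometric bound that, for every $m\in A_k$,
\begin{equation*}
1+|m-n|\ \ge\ \tfrac12\big(1+||n|-k|\big).
\end{equation*}
Indeed $|m-n|\ge||m|-|n||$, and since $k\le|m|<k+1$ one has $||n|-k|\le||n|-|m||+||m|-k|<|m-n|+1$; adding this to $1+|m-n|\ge1$ yields the claim. Writing $q=l+(q-l)$ and peeling off $l$ powers of the decay via this estimate,
\begin{equation*}
\sum_{m\in A_k}|\psi_{m-n}|\le c_q\,\frac{2^l}{(1+||n|-k|)^l}\sum_{m\in A_k}\frac{1}{(1+|m-n|)^{q-l}}\le\frac{c_q\,2^l}{(1+||n|-k|)^l}\sum_{p\in\mathbb{Z}^N}\frac{1}{(1+|p|)^{q-l}},
\end{equation*}
and the final lattice sum converges to a constant independent of $n$ and $k$ as soon as $q-l>N$. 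Choosing, say, $q=l+N+1$ and then squaring (using $(1+||n|-k|)^{-2l}\le(1+||n|-k|)^{-l}$) gives (\ref{Big}).

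The one genuine obstacle is uniformity in $k$. A naive pointwise bound on $|(\Theta_j)_n|$ followed by summing $N(j)=\#\{m:|m|^2=j\}$ over the range $k^2\le j<(k+1)^2$ would introduce a factor comparable to $|A_k|\sim k^{N-1}$ and destroy the $k$-independence of $C_l$. Collapsing to the annulus sum \emph{before} invoking any decay, as above, is precisely what removes this growth; the arbitrariness of $q$ then simultaneously supplies the extracted factor $(1+||n|-k|)^{-l}$ and the convergence of the residual lattice tail.
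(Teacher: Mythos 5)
Your proof is correct and is essentially the argument the paper intends: the paper justifies Lemma \ref{bigl} only by appeal to the rapid decay $|\psi_m|\le c_q(1+|m|)^{-q}$ (``the same arguments as above''), and your combination of $\sum_j a_j^2\le\bigl(\sum_j a_j\bigr)^2$ with the collapse of $\sum_{k\le\sqrt{j}<k+1}\sum_{|m|^2=j}$ into a single sum over the lattice annulus $\{k\le |m|<k+1\}$, followed by peeling off $l$ powers of decay via $1+|m-n|\ge\tfrac12(1+||n|-k|)$, is the right way to fill in those details. You also correctly identified the one real pitfall---termwise squaring plus counting lattice points on spheres would cost a factor $\sim k^{N-1}$ and ruin uniformity in $k$---and your annulus-collapse step is precisely what avoids it.
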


If we sum this estimate with respect to $k$ from $0$ to $\infty$,
then we obtain the next corollary.

\begin{cor}\label{LBig} Uniformly in $n\in\mathbb{Z}^N$ one has

$$\sum\limits_{j=0}^\infty|(\Theta_{j})_n|^2=\sum\limits_{k=0}^\infty\sum\limits_{k\leq\sqrt{j}<k+1}|(\Theta_j)_n|^2\leq C.$$

\end{cor}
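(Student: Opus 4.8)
The plan is to read off Corollary~\ref{LBig} from Lemma~\ref{bigl} by a single summation over $k$, the only point of substance being the uniform-in-$n$ bound for the resulting one-dimensional series.

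First I would verify the displayed identity, which is merely a regrouping of the $j$-series. For each nonnegative integer $j$ there is exactly one integer $k$ with $k\leq\sqrt{j}<k+1$, namely $k=\lfloor\sqrt{j}\rfloor$, equivalently $k^2\leq j<(k+1)^2$. Thus the index set $\{0,1,2,\dots\}$ is partitioned into the consecutive blocks $\{k^2,\dots,(k+1)^2-1\}$ as $k$ ranges over $0,1,2,\dots$. Since every summand $|(\Theta_j)_n|^2$ is nonnegative, this regrouping is legitimate by monotone convergence, giving
$$
\sum_{j=0}^\infty|(\Theta_j)_n|^2=\sum_{k=0}^\infty\ \sum_{k\leq\sqrt{j}<k+1}|(\Theta_j)_n|^2.
$$

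Next I would fix an exponent $l\geq 2$ in Lemma~\ref{bigl} and estimate each inner block by $C_l(1+||n|-k|)^{-l}$. Summing over $k$ then gives
$$
\sum_{j=0}^\infty|(\Theta_j)_n|^2\leq C_l\sum_{k=0}^\infty\frac{1}{(1+||n|-k|)^l},
$$
so it remains to bound the right-hand side independently of $n$. Writing $m=|n|\geq 0$ and extending the sum to all integers $k\in\mathbb{Z}$ only enlarges it; grouping the integers $k$ by their distance $p=0,1,2,\dots$ from the nearest integer to $m$ places at most two indices $k$ at each distance comparable to $p$, whence
$$
\sum_{k=0}^\infty\frac{1}{(1+|m-k|)^l}\leq\sum_{k\in\mathbb{Z}}\frac{1}{(1+|m-k|)^l}\leq C\sum_{p=0}^\infty\frac{1}{(1+p)^l}.
$$
The last series converges for $l\geq 2$ and its value is independent of $m$; this yields the uniform constant $C$ asserted in the corollary.

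I do not expect a serious obstacle here, as the corollary is a routine consequence of Lemma~\ref{bigl}. The only step meriting care is checking that the $k$-sum does not blow up near $k=|n|$: there the denominator stays $\geq 1$ while only $O(1)$ indices cluster around the peak, and away from the peak the summand decays like $|m-k|^{-l}$, producing a convergent tail whose total is controlled uniformly in $n$.
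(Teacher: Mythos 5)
Your proposal is correct and is essentially the paper's own argument: the paper obtains the corollary precisely by summing the estimate of Lemma~\ref{bigl} over $k$ from $0$ to $\infty$, and you have simply supplied the routine details (the regrouping of the $j$-series into the blocks $k^2\leq j<(k+1)^2$ and the uniform-in-$n$ bound $\sum_{k}(1+||n|-k|)^{-l}\leq C$ for $l\geq 2$) that the paper leaves implicit.
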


Now we try to estimate the same sum for $(\theta_{j})_n$. Let
$k\leq\sqrt{j}< k+1$, i.e. $j=k^2+p,$ $p=0, 1, ..., 2k$. From the
estimate (\ref{Coef2}) we have
$$
\sum\limits_{k\leq\sqrt{j}<k+1}|(\theta_j)_n|^2
=\sum\limits_{p=0}^{2k}|(\theta_{k^2+p})_n|^2\leq \frac{C_l\cdot
k}{(1+||n|-k|)^l},
$$
therefore
\begin{equation}\label{W}
\sum\limits_{j=0}^\infty|(\theta_{j})_n|^2=\sum\limits_{k=0}^\infty\sum\limits_{k\leq\sqrt{j}<k+1}|(\theta_j)_n|^2\leq
C\cdot |n|.
\end{equation}

But this estimate is not sufficient for our purpose. Indeed, let
$\Theta_j(x)=\theta_{j+1}(x)-\theta_j(x)$.  Then $\theta_{j+1}\ast
f+\theta_j\ast f=2\, \theta_j\ast f+\Theta_j\ast f$. Note the
Fourier coefficients of the function $\Theta_j(x)$ are the numbers
$(\Theta_j)_n$, introduced above.

Now we use the idea of the proof of estimate (\ref{e}). If for a
sequence of numbers $\{F_q\}$ we have $F_0=0$, then
$$
F_q^2=\sum\limits_{j=0}^{q-1}[F_{j+1}-F_j][F_{j+1}+F_j],\,\, q\geq
1.
$$
Hence
$$
[\theta_q\ast f]^2 =\sum\limits_{j=0}^{q-1}[\Theta_j\ast
f]^2+2\,\sum\limits_{j=0}^{q-1} [\Theta_j\ast f] [\theta_j\ast f],
$$
or
\begin{equation}\label{W2}
\sup_{q >\,0}\left|\theta_q\ast
f\right|^2\leq\sum\limits_{j=0}^{\infty}|\Theta_j\ast f|^2+
2\,\sum\limits_{k=0}^{\infty}\sum\limits_{p=0}^{2k}|\Theta_{k^2+p}\ast
f||\theta_{k^2+p}\ast f|.
\end{equation}
Integrating over $T^N$ and making use of the inequality $2ab\leq
a^2+b^2$ one has
\begin{equation}\label{sum}
\int\limits_{T^N}\sup_{q >\,0}\left|\theta_q\ast f\right|^2\leq
2\sum\limits_{n}|f_n|^2\sum\limits_{j=0}^\infty|(\Theta_{j})_n|^2+
\sum\limits_{n}|f_n|^2\sum\limits_{j=0}^\infty|(\theta_{j})_n|^2.
\end{equation}
By virtue of Corollary \ref{LBig} and the fact that $f$ is an
$L_2$ - function, we can estimate first sum on the right hand side
of (\ref{sum}) by
$C\sum\limits_n|f_n|^2=C\int\limits_{\mathbb{T}^N}|f(x)|^2 dx. $
But the second sum we can estimate, using (\ref{W}), only by
$C\sum\limits_n|f_n|^2 |n|$. Thus we must have a better estimate
for the sum (\ref{W}).

To prove Theorem \ref{MAX1} we need the same estimate as
(\ref{Big}) for the sum
$\sum\limits_{p=0}^{2k}(p+1)^2|(\Theta_{k^2+p})_n|^2$ (that is,
the value under the sum  (\ref{Big})  is multiplied by $(p+1)^2$).
To obtain this estimate we must show, that
$|(\Theta_{k^2+p})_n|^2$ decreases fast enough in $p$.
Unfortunately this is not always the case. Nevertheless if we
change the order of summation in the last sum (since the summands
are positive, this is always possible), say first take the sum
over some sets $Q_q^k$, $q =0, 1, ...$ of parameters $p$, and then
take the sum over $q =0, 1, ...$, we can succeed to prove the
necessary estimate.

\begin{lem}\label{Q}
There exist sets $Q_q^k$, $q =0, 1, \cdot\cdot\cdot, 2k-1$,  of
integers $p$, $0\leq p\leq 2k$, such, that for any $l$, one has an
estimate with a constant $C_l$:
\begin{equation}\label{SBig}
\sum\limits_{q=0}^{2k-1}(q+1)^2\sum\limits_{p\in
Q_q^k}|(\Theta_{k^2+p})_n|^2\leq \frac{C_l}{(1+\sqrt{||n|-k|})^l}.
\end{equation}

\end{lem}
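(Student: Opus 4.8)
The plan is to combine the block bound (\ref{Big}) with the explicit expression $(\Theta_{k^2+p})_n=(2\pi)^{-N}\sum_{|m|^2=k^2+p}\psi_{m-n}$, in which the coefficients $\psi_{m-n}$ decay faster than any power of $1+|m-n|$. The difficulty that (\ref{SBig}) is built to avoid is already visible here: the main part of $(\Theta_{k^2+p})_n$ is carried by the lattice point $m$ nearest to $n$ on the sphere $|m|^2=k^2+p$, and for $p=p_0:=|n|^2-k^2$ this point is $m=n$ itself, contributing $\psi_0$, a quantity of size $1$. Thus $|(\Theta_{k^2+p})_n|^2$ need not decrease in $p$ at all; its peak sits at $p_0$, which may be as large as $2k$. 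Multiplying by the honest weight $(p+1)^2$ and summing could therefore produce a quantity of size $k^2$, hopelessly larger than the right-hand side of (\ref{Big}). The role of the sets $Q_q^k$ is to trade this fatal pointwise weight $(p+1)^2$ for the group weight $(q+1)^2$, the index $q$ being arranged so that the heavy terms are assigned to low-index groups.

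First I would reindex by the lattice points in the shell $k\le|m|<k+1$: writing $w=m-n$, each such $m$ feeds the single value $p(m)=|m|^2-k^2=p_0+2\,n\cdot w+|w|^2$ with amplitude $\psi_w$, so that $|(\Theta_{k^2+p})_n|\le(2\pi)^{-N}\sum_{|m|^2=k^2+p}|\psi_{m-n}|$. I would then group the admissible $p$ by the dyadic scale of the proximity $|m-n|$ of the dominant lattice point, so that the $q$-th group corresponds, roughly, to $|w|\sim 2^{q}$. The gain is a competition between two effects: there are only $O(2^{qN})$ lattice vectors $w$ with $|w|\sim 2^{q}$, whereas $|\psi_w|^2$ decays faster than any power of $2^{q}$; hence for every $M$ one has $\sum_{p\in Q_q^k}|(\Theta_{k^2+p})_n|^2\le C_M\,2^{qN}(1+2^{q})^{-M}$, and choosing $M$ large makes this summable against $(q+1)^2$. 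In this ordering the heaviest term, the peak at $p_0$ coming from $w=0$, lands in the group $q=0$ and is charged only the weight $1$.

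I would then recover the decay in $||n|-k|$ directly from the geometry: a nonzero contribution to block $k$ forces a lattice point near $n$ to lie in the shell $k\le|m|<k+1$, so the smallest available $|w|$ is itself at least of order $||n|-k|$; feeding this into the scale-by-scale estimate upgrades the bound to $C_M(1+\max(2^{q},||n|-k|))^{-M}$ per group, and summing over $q$ against $(q+1)^2$ yields the claimed right-hand side. The square root in $(1+\sqrt{||n|-k|})^{-l}$ is exactly the slack I would reserve to absorb the polynomial losses caused by the weights $(q+1)^2$ and by passing from the true shell distance to a dyadic grouping; since $l$ is arbitrary in (\ref{Big}), this relaxation costs nothing of substance.

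The step I expect to be the main obstacle is forcing the partition to depend only on $(q,k)$ and not on $n$, because the natural ordering -- by proximity of the nearest lattice point to the given $n$ -- is visibly $n$-dependent, and it is precisely this ordering that tames the non-decaying peak. I would handle it by replacing proximity-to-$n$ with an $n$-independent geometric scale on the shell (a dyadic decomposition into caps, transcribed onto the $p$-axis) and verifying that, for every $n$ simultaneously, the resulting weighted sum still obeys (\ref{SBig}); the room needed for this uniformity is supplied both by the square root in the exponent and by the convergence of $\sum_q(q+1)^{-2}$, the series through which the weight $(q+1)^2$ is eventually spent when (\ref{SBig}) is fed back into the maximal estimate (\ref{theta}).
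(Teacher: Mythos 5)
Your reading of the mechanism is indeed the paper's: after the substitution $m\mapsto n-m$, your quantity $|w|=|m-n|$ is exactly the paper's $|m|$ in $(\Theta_{k^2+p})_n=(2\pi)^{-N}\sum_{|m-n|^2=k^2+p}\psi_m$, and both arguments run on the same three ingredients -- distance from the origin (resp.\ from $n$) of the nearest contributing lattice point, superpolynomial decay of the coefficients of $\psi$, and a count of how many $p$ can fall into one group. Note, however, that the obstacle you single out as the main one is not what separates your attempt from the paper: the paper's sets $Q_q^k$ are themselves built from $n$ (the annulus $\{k\le|x-n|<k+1\}$ and the cylinders are centered on the axis $On$), so Lemma \ref{Q} is not proved there with $n$-independent sets either.

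The genuine gap is the dyadic grouping $|w|\sim 2^q$ itself. It does yield (\ref{SBig}), but (\ref{SBig}) in isolation is nearly vacuous: the trivial choice $Q_0^k=\{0,1,\dots,2k\}$ with all other sets empty already satisfies it, since its left-hand side then coincides with that of (\ref{Big}) and Lemma \ref{bigl} applies. The whole content of Lemma \ref{Q} is that the \emph{same} sets must also serve in Lemma \ref{Lsmall}, where they carry the inverse weights $(q+1)^{-2}$ against the coefficients $(\theta_{k^2+p})_n$, which for $k=[\,|n|\,]$ admit no decay in $p$ whatsoever (this is the original difficulty, cf.\ (\ref{W})). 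Lemma \ref{Lsmall} therefore lives entirely off a cardinality bound -- the paper shows and uses $|Q_q^k|\le 2^N[\sqrt{q+1}\,]$, so that $\sum_q |Q_q^k|(q+1)^{-2}<\infty$ -- and this is precisely what dyadic groups destroy: your construction puts up to $\min(C2^{qN},2k+1)$ values of $p$ into $O(\log k)$ nonempty groups, so for $k=[\,|n|\,]$ some group of index $q^*=O(\log k)$ contains $\gtrsim k/\log k$ values of $p$, the corresponding sum in Lemma \ref{Lsmall} is of order $k/\log^3 k$, and the maximal estimate (\ref{theta}) can no longer be derived. This is exactly why the paper does not group by nearest-point distance directly, but by cylindrical shells around $On$ of radii $\sqrt{q}\le\mathrm{dist}<\sqrt{q+1}$: the index $q$ then runs polynomially up to $2k$, each group has only $O(\sqrt{q+1})$ elements, and every lattice point feeding group $q$ still satisfies $|m|\gtrsim\sqrt{(|n|-k)^2+q}$, so the decay of $\psi$ beats $(q+1)^2$ in (\ref{SBig}) while the smallness of the groups saves Lemma \ref{Lsmall}. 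Finally, your proposed repair of the $n$-dependence by a fixed decomposition into caps cannot succeed: if $|n|^2=k^2+p$, then $(\Theta_{k^2+p})_n$ contains the term $\psi_0$ and is in general of order $1$, so any grouping obeying (\ref{SBig}) must place that $p$ in a group of bounded index; letting $n$ range over lattice points with $k\le|n|<k+1$ pins essentially every $p\in[0,2k]$ to bounded $q$, i.e.\ an $n$-independent grouping collapses to the trivial one and can never satisfy Lemma \ref{Lsmall}.
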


Our next aim is to define the sets $Q_q^k$, $q =0, 1, ...$ with
above property.

Denote by $y_0$ the intersection point of the ball $\{x\in
\mathbb{R}^N: |x-n|\leq k+1\}$ with the straight line $On$ that
passes through the origin and point $n$ (the nearest one to the
origin). Let $T_{y_0}$ be the tangential hyperplane to the ball
$\{x\in \mathbb{R}^N: |x-n|\leq k+1\}$ at the point $y_0$. Let
$B_0:=\{y\in T_{y_0}: |y-y_0| < 1\}$ and $B_j:=\{y\in T_{y_0}:
\sqrt{j}\leq |y-y_0| < \sqrt{j+1}\,\}$, where $j=1,2, ..., 2k-1$.
Let $C^k_j$, $j=0, 1, ..., 2k-1$, be the $N-$ dimensional
cylinders with the base $B_j$ and with the axis parallel to $On$
and the length $|n|$. Consider the ring $K=\{x\in \mathbb{R}^N:
k\leq |x-n| < (k+1)\}$ and divide it in to the following sets:
$P_j^k = K\cap C_j^k$, $j=0, 1, ..., 2k-1$.

Let us define the sets $Q_q^k$, $q =0, 1, ..., 2k-1$,
as follows. Let $Q_q^k$ be the set of those integers $p$, $0\leq
p\leq 2k$, for which the Diophantine equation $|m-n|^2=k^2+p$ has a solution in $P_q^k$. If $P_q^k$
does not contain any of solutions of equation $|m-n|^2=k^2+p$, for any $p$,
then we assign to the set $Q_q^k$ one of those parameters $p$ that are not included in the previous sets $Q_j^k$,
$j =0, 1, ..., q-1$. If there are no such
 $p's$ left, then we define $Q_j^k$, $j =q, q+1, ...,
2k-1$ as the empty set.

In the proof of Lemma \ref{Lsmall} we need to know  how many at
most parameters $p$ does the set $Q_q^k$ contain. Since the sets
$P_q^k$ are sufficiently small, then there should not be many such
parameters. Indeed, the length of the projection of $P_q^k$ on the
axis of $Ox_1$ is at most $2\sqrt{q+1}$. Consequently, if, for a
fixed $p$, there is a solution of the Diophantine equation
$|m-n|^2=k^2+p$, provided $m\in P_q^k$, then the first coordinates
$m_1$ of the numbers $m$, take at most $[2\sqrt{q+1}\,]$ ($[a]$ is
the integer part of the number $a$) different values. When $p$
varies from $0$ to $2k$, each of these numbers $m_1$ can repeat at
most $2^{N-1}$ times (that is the number of all vectors $m=(m_1,
m_2,...,m_N)\in \mathbb{Z}^N$ within unit cube with fixed $m_1$).
Hence each set $Q_q^k$ has at most $2^N[\sqrt{q+1}]$ parameters
$p$ with the above property (one can give a better estimate for
this number, but for our purpose this estimate is sufficient).

If we denote $S_p = \{m\in \mathbb{Z}^N: |m-n|^2 = k^2+p\}$ ($p =
0, 1, ..., 2k$), then with this choice of $Q_q^k$ we have the
following estimates.

If $|n|\geq k+1$, then
$$
\min\limits_{m\in S_p,\,p\in Q_q^k}|m|\geq \sqrt{(|n|-k-1)^2+q}.
$$
If $k< |n|< k+1$, then
$$
\min\limits_{m\in S_p,\,p\in Q_q^k}|m|\geq \sqrt{q}.
$$
If $|n|\leq k$, then
$$
\min\limits_{m\in S_p,\,p\in Q_q^k}|m|\geq
\frac{1}{2}\sqrt{(|n|-k)^2+q}.
$$

Lemma \ref{Q} is a consequence of these estimates.

\begin{cor}\label{Sbigl} Uniformly in $n\in\mathbb{Z}^N$, one has
$$
\sum\limits_{k=0}^{\infty}\sum\limits_{q=0}^{2k-1}(q+1)^2\sum\limits_{p\in
Q_q^k}|(\Theta_{k^2+p})_n|^2\leq C.
$$

\end{cor}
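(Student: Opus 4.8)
The plan is to derive Corollary \ref{Sbigl} directly from Lemma \ref{Q} by summing the bound (\ref{SBig}) over all nonnegative integers $k$. Since Lemma \ref{Q} is available for every $l$, I fix a convenient exponent, say $l=3$, so that for each $k$ one has
$$
\sum_{q=0}^{2k-1}(q+1)^2\sum_{p\in Q_q^k}|(\Theta_{k^2+p})_n|^2\leq \frac{C_3}{(1+\sqrt{||n|-k|})^3}.
$$
Summing in $k$, the left-hand side becomes precisely the double sum appearing in the statement of the corollary, so it remains only to verify that
$$
\sum_{k=0}^{\infty}\frac{1}{(1+\sqrt{||n|-k|})^3}\leq C
$$
with a constant $C$ that does not depend on $n$.

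To prove this last estimate I would group the indices according to the integer $d=\lfloor\,||n|-k|\,\rfloor$. Writing $a=|n|$, for each $d\geq 0$ there are at most two integers $k$ with $d\leq|a-k|<d+1$, and for every such $k$ one has $1+\sqrt{|a-k|}\geq 1+\sqrt{d}$. Consequently
$$
\sum_{k=0}^{\infty}\frac{1}{(1+\sqrt{||n|-k|})^3}\leq 2\sum_{d=0}^{\infty}\frac{1}{(1+\sqrt{d})^3}.
$$
The series on the right converges, its general term being comparable to $d^{-3/2}$, and its sum is a finite constant independent of $n$. This yields the desired uniform bound $C$ and completes the proof.

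There is essentially no serious obstacle remaining at this stage: all the delicate work—the geometric decomposition into the sets $Q_q^k$ together with the Diophantine counting that produced the decay $(1+\sqrt{||n|-k|})^{-l}$, aided by the gain from the weight $(q+1)^2$—is already packaged inside Lemma \ref{Q}. The only point requiring a moment's care is the choice of $l$: one must take $l$ large enough (any $l>2$ suffices) so that the resulting series $\sum_k (1+\sqrt{||n|-k|})^{-l}$ converges uniformly in $n$. This is exactly the reason why the square root in the denominator, rather than a linear term, is harmless here, and it is also what makes the weighted estimate (\ref{SBig}) strong enough to replace the insufficient bound (\ref{W}) in the proof of Theorem \ref{MAX1}.
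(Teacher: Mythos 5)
Your proposal is correct and follows essentially the same route as the paper: the corollary is obtained by summing the estimate of Lemma \ref{Q} over $k$, exactly as the paper does (in parallel with how Corollary \ref{LBig} is deduced from Lemma \ref{bigl}). Your added verification that $\sum_{k}(1+\sqrt{||n|-k|})^{-l}$ converges uniformly in $n$ once $l>2$ --- grouping indices by $\lfloor ||n|-k|\rfloor$, with at most two values of $k$ per group --- is precisely the detail the paper leaves implicit, and you handle it correctly.
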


Now we turn back to the Fourier coefficients $(\theta_{j})_n$.
From Lemma \ref{coef2} we have the following estimate.

\begin{lem}\label{Lsmall}

Uniformly in $n\in\mathbb{Z}^N$, one has
$$
\sum\limits_{k=0}^{\infty}\sum\limits_{q=0}^{2k-1}(q+1)^{-2}\sum\limits_{p\in
Q_q^k}|(\theta_{k^2+p})_n|^2\leq C.
$$

\end{lem}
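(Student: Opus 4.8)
The plan is to derive Lemma \ref{Lsmall} purely from the coefficient bound in Lemma \ref{coef2} together with the cardinality bound on the sets $Q_q^k$ established above, namely that each $Q_q^k$ contains at most $2^N[\sqrt{q+1}\,]$ parameters $p$. The point to keep in mind is that the individual coefficients $(\theta_{k^2+p})_n$ carry no useful decay in $p$ — this is precisely why the crude estimate (\ref{W}) gave only $C\cdot|n|$ — so the convergence must come entirely from the interplay between the weight $(q+1)^{-2}$ and the slow growth of the count $|Q_q^k|$.

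First I would note that the bound from Lemma \ref{coef2} is, up to a fixed constant, independent of $p$ in the relevant range. Indeed, for $0\leq p\leq 2k$ one has $k\leq\sqrt{k^2+p}\leq k+1$, so $|\sqrt{k^2+p}-k|\leq 1$ and hence $1+||n|-\sqrt{k^2+p}|\geq\tfrac12(1+||n|-k|)$. Squaring the bound of Lemma \ref{coef2} and combining with this inequality gives, uniformly in $p\in\{0,1,\dots,2k\}$,
$$
|(\theta_{k^2+p})_n|^2\leq\frac{C_l}{(1+||n|-k|)^{2l}}.
$$

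Next I would sum over $p\in Q_q^k$ using the cardinality bound $|Q_q^k|\leq 2^N\sqrt{q+1}$, which yields
$$
\sum_{p\in Q_q^k}|(\theta_{k^2+p})_n|^2\leq\frac{C_l\,\sqrt{q+1}}{(1+||n|-k|)^{2l}}.
$$
Multiplying by $(q+1)^{-2}$ produces the factor $(q+1)^{-3/2}$, whose sum over $q$ converges, so that
$$
\sum_{q=0}^{2k-1}(q+1)^{-2}\sum_{p\in Q_q^k}|(\theta_{k^2+p})_n|^2\leq\frac{C_l}{(1+||n|-k|)^{2l}}.
$$
Finally, fixing any $l\geq 1$ and summing over $k$, the shifted tail $\sum_{k=0}^\infty(1+||n|-k|)^{-2l}$ is bounded by a constant independent of $n$ (since $2l>1$), which gives the claimed uniform bound.

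The step I expect to require the most care is the cardinality estimate $|Q_q^k|\leq 2^N\sqrt{q+1}$; but this is exactly the geometric content already extracted above from the construction of the cylinders $C_j^k$ and the projection argument onto the $Ox_1$-axis, so here it can be invoked directly. The remaining work is a routine bookkeeping of the two convergent series in $q$ and in $k$. The only genuinely substantive observation is that replacing $\sqrt{k^2+p}$ by $k$ is harmless, so that the entire $p$-dependence is transferred into the count $|Q_q^k|$ and thereby tamed by the weight $(q+1)^{-2}$ — which is precisely what was lacking in the naive estimate (\ref{W}).
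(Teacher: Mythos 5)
Your proof is correct and follows essentially the same route as the paper's: both combine the cardinality bound $|Q_q^k|\leq 2^N[\sqrt{q+1}\,]$ with Lemma \ref{coef2}, exploit that $[\sqrt{q+1}\,]/(q+1)^2$ is summable in $q$, and then sum the remaining decay in $k$ uniformly in $n$. The only difference is presentational: you spell out the harmless replacement of $\sqrt{k^2+p}$ by $k$ via $1+||n|-\sqrt{k^2+p}|\geq\tfrac12(1+||n|-k|)$, which the paper leaves implicit.
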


\begin{proof} As mentioned above, each $Q_q^k$ has at most $2^N[\sqrt{q+1}]$ parameters $p$. Therefore, by virtue of Lemma \ref{coef2},
one has
$$
\sum\limits_{k=0}^{\infty}\sum\limits_{q=0}^{2k-1}(q+1)^{-2}\sum\limits_{p\in
Q_q^k}|(\theta_{k^2+p})_n|^2\leq
\sum\limits_{k=0}^{\infty}\frac{C_l}{(1+||n|-k|)^l}\sum\limits_{q=0}^{2k-1}\frac{2^N[\sqrt{q+1}]}{(q+1)^{2}}\leq
C.
$$
\end{proof}

\textbf{3. Proofs of Theorems.} Now we rewrite estimate (\ref{W2})
as
$$
\sup_{q\in\mathbb{N}}\left|\theta_q\ast
f\right|^2\leq\sum\limits_{j=0}^{\infty}|\Theta_j\ast
f|^2+2\,\sum\limits_{k=0}^{\infty}\sum\limits_{q=0}^{2k-1}\sum\limits_{p\in
Q_q^k}|\Theta_{k^2+p}\ast f|(q+1)|\theta_{k^2+p}\ast f|(q+1)^{-1}.
$$

Integrating over $\mathbb{T}^N$ and making use of Corollaries
\ref{LBig}, \ref{Sbigl} and Lemma \ref{Lsmall}, we have

$$
\int\limits_{\mathbb{T}^N}\sup_{q\in\mathbb{N}}\left|\theta_q\ast
f\right|^2\leq\sum\limits_{n}|f_n|^2\sum\limits_{j=0}^\infty|(\Theta_{j})_n|^2+
$$
$$
+\sum\limits_{n}|f_n|^2\sum\limits_{k=0}^{\infty}\sum\limits_{q=0}^{2k-1}(q+1)^2\sum\limits_{p\in
Q_q^k}|(\Theta_{k^2+p})_n|^2+
$$
$$
+\sum\limits_{n}|f_n|^2\sum\limits_{k=0}^{\infty}\sum\limits_{q=0}^{2k-1}(q+1)^{-2}\sum\limits_{p\in
Q_q^k}|(\theta_{k^2+p})_n|^2\leq
C\sum\limits_n|f_n|^2=C\int\limits_{\mathbb{T}^N}|f(x)|^2 dx.
$$

Thus, the estimate (\ref{theta}) and, consequently,  Theorem
\ref{MAX} is proved.

Theorem \ref{MAIN} can be proved by a
standard technique based on Theorem \ref{MAX} (see \cite{ST}). Indeed, let $f\in L_2(\mathbb{T}^N)$ and $f=0$
on $\Omega\subset \mathbb{T}^N$. We must prove
$$\lim_{\lambda \to \infty} S_\lambda f(x) =  0\,\, a.e.\,\,on\,\,\Omega.$$
Let
$K\subset \Omega$ be an arbitrary compact. Then for an arbitrary $\delta>0$
there exist two $2\pi$-periodical on each argument functions
$f_1$ and $f_2$ with the following properties: $f=f_1+f_2$ and $f_1=f_2=0$ on  $\Omega$;
$f_1\in C^\infty(\mathbb{T}^N)$ and
$||f_2||_{L_2(\mathbb{T}^N)}<\delta$.
Moreover for an arbitrary $\varepsilon>0$ and $\lambda >
\lambda_0(\varepsilon)$ one has $|S_\lambda
f_1(x)|<\frac{\varepsilon}{2}$ on $\Omega$ and $M_\varepsilon = mes
\{x\in K\,\, |,\,\, |S_\star
f_2(x)|>\frac{\varepsilon}{2}\}<\varepsilon$ (this inequality
follows from Theorem \ref{MAX}).

Therefore, if $\lambda > \lambda_0(\varepsilon)$ then on
$K\setminus M_\varepsilon$ one has
$$
|S_\lambda f|\leq |S_\lambda f_1|+|S_\star f_2|<\varepsilon.
$$
Obviously,
this means $S_\lambda f(x)\rightarrow 0$ a.e. on $\Omega$.

\section{ Acknowledgement} The author conveys thanks to Sh. A. Alimov
for discussions of this result and gratefully acknowledges S. Umarov (University New Haven, USA) for
support and hospitality.

 The author was supported by Foundation for Support of Basic Research of the Republic of Uzbekistan
 (project number is OT-F4-88).

\

\

\bibliographystyle{amsplain}

\end{document}